\documentclass[11pt,letterpaper]{article}

\usepackage{amsmath,amssymb,amsfonts,amsthm,mathrsfs}
\usepackage{graphicx}
\usepackage[usenames,dvipsnames]{color}

\usepackage[margin=1in]{geometry}

\theoremstyle{plain}
\newtheorem{thm}{Theorem}%[section]
\newtheorem{lem}[thm]{Lemma}
\newtheorem{cor}[thm]{Corollary}
\newtheorem{prop}[thm]{Proposition}
\newtheorem{remark}[thm]{Remark}

\theoremstyle{definition}
\newtheorem{obs}[thm]{Observation}
\theoremstyle{remark}

\newcommand{\real}{\ensuremath {\mathbb R} }

\newcommand{\nat}{\ensuremath {\mathbb N} }

\newcommand{\remove}[1] {}

\newcommand{\ex} {{\bf E}}
\newcommand{\pr} {{\bf Pr}}
\newcommand{\var} {{\bf Var}}

\newcommand{\cD} {\ensuremath{\mathcal D}}
\newcommand{\sG} {\ensuremath{\mathscr G}}
\newcommand{\G}{{\mathcal{G}}}
\newcommand{\eps}{\varepsilon}

\newcommand{\p}{{\widehat p}}
\DeclareMathOperator{\Bin}{Bin}

\title{The bondage number of random graphs}
\author{Dieter Mitsche\thanks{e-mail: \texttt{dmitsche@unice.fr}, Universit\'{e} de Nice Sophia-Antipolis, Laboratoire J.A.~Dieudonn\'{e}, Parc Valrose, 06108 Nice cedex 02, France} 
\and Xavier P\'erez-Gim\'enez\thanks{e-mail: \texttt{xperez@ryerson.ca}, Department of Mathematics, Ryerson University, Toronto, ON, Canada} 
\and Pawe{\l} Pra{\l}at\thanks{e-mail: \texttt{pralat@ryerson.ca}, Department of Mathematics, Ryerson University, Toronto, ON, Canada; research partially supported by NSERC and Ryerson University}}
\date{}

\begin{document}

\maketitle

\begin{abstract}
A dominating set of a graph is a subset $D$ of its vertices such that every vertex not in $D$ is adjacent to at least one member of $D$. The domination number of a graph $G$ is the number of vertices in a smallest dominating set of $G$. The bondage number of a nonempty graph $G$ is the size of a smallest set of edges whose removal from $G$ results in a graph with domination number greater than the domination number of $G$. In this note, we study the bondage number of the binomial random graph $\sG(n,p)$. We obtain a lower bound that matches the order of the trivial upper bound. As a side product, we give a one-point concentration result for the domination number of $\sG(n,p)$ under certain restrictions.
\end{abstract}

\section{Introduction}

In this paper, we consider the \textbf{Erd\H{o}s-R\'enyi random graph process}, which is a stochastic process that starts with $n$ vertices and no edges, and at each step adds one new edge chosen uniformly at random from the set of missing edges. Formally, let $e_1, e_2, \ldots, e_{n \choose 2}$ be a random permutation of the edges of the complete graph $K_n$. The graph process consists of the sequence of random graphs $( \G(n,m) )_{m=0}^{n \choose 2}$, where $\G(n,m)=(V,E_m)$, $V = [n] := \{1, 2, \ldots, n\}$, and $E_m = \{ e_1, e_2, \ldots, e_m \}$. It is clear that $\G(n,m)$ is a graph taken uniformly at random from the set of all graphs on $n$ vertices and $m$ edges (see, for example,~\cite{bol, JLR} for more details.)

Our results refer to the random graph process. However, it will be sometimes easier to work with the $\sG(n,p)$ model instead of $\G(n,m)$. The (binomial) \textbf{random graph} $\sG(n,p)$ consists of the probability space $(\Omega, \mathcal{F}, \Pr)$, where $\Omega$ is the set of all graphs with vertex set $[n]$, $\mathcal{F}$ is the family of all subsets of $\Omega$, and for every $G \in \Omega$,
$$
\Pr (G) = p^{|E(G)|} (1-p)^{{n \choose 2} - |E(G)|} \,.
$$
This space may be viewed as the set of outcomes of ${n \choose 2}$ independent coin flips, one for each pair $\{u,v\}$ of vertices, where the probability of success (that is, adding edge $uv$) is $p.$ Note that $p=p_n$ may (and usually does) tend to zero as $n$ tends to infinity.  

\medskip

All asymptotics throughout are as $n \rightarrow \infty$ (we emphasize that the notations $o(\cdot)$ and $O(\cdot)$ refer to functions of $n$, not necessarily positive unless otherwise stated, whose growth is bounded; on the other hand, functions hidden in $\Theta(\cdot)$ and $\Omega(\cdot)$ notations are positive). We use the notation $a_n\sim b_n$ to denote $a_n=(1+o(1))b_n$. A sequence $a_n$ satisfies a certain property \textbf{eventually} if the property holds for all but finitely many terms of the sequence. We say that an event in a probability space holds \textbf{asymptotically almost surely} (or \textbf{a.a.s.}) if the probability that it holds tends to $1$ as $n$ goes to infinity. We often write $\G(n,m)$ and $\sG(n,p)$ when we mean a graph drawn from the distribution $\G(n,m)$ and $\sG(n,p)$, respectively. All logarithms in this paper are natural logarithms.

\bigskip

A \textbf{dominating set} for a graph $G = (V, E)$ is a subset $D$ of $V$ such that every vertex not in $D$ is adjacent to at least one member of $D$. The \textbf{domination number}, $\gamma(G)$, is the number of vertices in a smallest dominating set for $G$. The \textbf{bondage number}, $b(G)$, of a non-empty graph $G$ is the smallest number of edges that need to be removed in order to increase the domination number; that is,
$$
b(G) = \min\{ |B| : B \subseteq E, \gamma(G-B) > \gamma(G) \}.
$$
(If $G$ has no edges, then we define $b(G) = \infty$.) 
This graph parameter was formally introduced in 1990 by Fink et al.~\cite{Fink} as a parameter for measuring the vulnerability of the interconnection network under link failure. However, it was considered already in 1983 by Bauer at al.~\cite{Bauer} as ``domination line-stability''. Moreover, graphs for which the domination number changes upon the removal of a single edge were investigated by Walikar and Acharya~\cite{Walikar} in 1979. One of the very first observations~\cite{Bauer, Fink} is the following upper bound:
$$
b(G) \le \min_{xy \in E} \{ \deg(x) + \deg(y) - 1 \} \le \Delta(G) + \delta(G)-1,
$$
where $\Delta(G)$ and $\delta(G)$ are the maximum and, respectively, the minimum degree of $G$. Since a.a.s.\ $\Delta(\sG(n,p)) \sim \delta(\sG(n,p)) \sim pn$ provided $pn \gg  \log n$ (this follows immediately from Chernoff's bound stated below, and the union bound), we get that a.a.s.\ 
\begin{equation}\label{eq:trivialupper1}
b(\sG(n,p)) \le 2 pn (1+o(1))
\end{equation}
for $pn \gg \log n$. For denser graphs, one can improve the leading constant of this upper bound by using the following observation of Hartnell and Rall~\cite{Hartnell}:
$$
b(G) \le \min_{xy \in E} \{ \deg(x) + \deg(y) - 1 - |N(x) \cap N(y)| \}.
$$
It follows that if $p = \Omega(1)$, then a.a.s.\
$$
b(\sG(n,p)) \le (2 p - p^2) n (1+o(1)).
$$
Today, many properties of the bondage number are studied. For more details the reader is directed to the survey~\cite{survey} which cites almost 150 papers on the topic.  

\section{Results}

Our goal is to investigate the bondage number of the binomial random graph on $n$ vertices and of the random graph process. Throughout the whole paper we will exclude the case $p = p_n \to 1$ and also assume that $p$ does not tend to zero too fast. More precisely, our main results require that $p=p_n$ eventually satisfies
\begin{equation}\label{eq:pdef}
n^{-1/3+\eps} \leq p \leq 1-\varepsilon,
\end{equation}
for some constant $\varepsilon > 0$, but most arguments only require the following, milder, constraint:
\begin{equation*}
\log^2 n/\sqrt{n} \ll  p \leq 1-\varepsilon.
\end{equation*}
Since our results are asymptotic in $n$, we will assume that $n$ is large enough so that all requirements in the argument are met. (In particular, the notation ``eventually'' is often implicitly assumed in the proofs and omitted.) Let $\cD_k$ be the set of dominating sets of size $k$ of $\sG(n,p)$, and let $X_k=|\cD_k|$. Clearly,
\begin{equation}\label{eq:fdef}
f(n,k,p) := \ex X_k = \binom{n}{k} \left(1-(1-p)^k\right)^{n-k}.
\end{equation}
For a given $p = p_n$, let
\begin{equation}\label{eq:rdef}
r = r_n = \min\{k \in\nat: f(n,k,p) > 1/(pn) \}.
\end{equation}
Since $pn \gg \sqrt{n}\log^2 n > 1$ (eventually) and $f(n,n,p) = 1$, the function $r$ is well defined for $n$ sufficiently large.

\subsection{Random Graph Process}

Consider the random graph process $( \G(n,m) )_{0\le m \le {n \choose 2} }$. Clearly, the random variable $\gamma( \G(n,m) )$ is a non-increasing function of $m$, $\gamma( \G(n,0) ) = \gamma( \overline{K}_n ) = n$, and $\gamma( \G(n,{n \choose 2}) ) = \gamma(K_n) = 1$. Suppose that at some point the domination number drops down, that is, there exists a value of $m$ such that $\gamma( \G(n,m) ) = k+1$ but $\gamma( \G(n,m+1) ) = k$. The random graph process continues and, as long as the domination number remains to be equal to $k$, the bondage number, $b( \G(n,m+\ell) )$, is a non-decreasing function of $\ell$. Moreover, we get that $b( \G(n,m+\ell) ) \le \ell$, as one can remove the last $\ell$ edges that were added in the process (namely, $e_{m+1}, e_{m+2}, \ldots, e_{m+\ell}$) in order to increase the domination number. A natural and interesting question is then to ask how large the bondage number is right before the domination number drops again; that is, what can be said about $b( \G(n,m+\ell) )$ when $\gamma( \G(n,m+\ell) ) = k$ but $\gamma( \G(n,m+\ell+1) ) = k-1$? It turns out that, for the range of $k$ we are interested in, it is of the order of the maximum degree of $\G(n,m+\ell)$, and hence it matches the trivial, deterministic, upper bound mentioned in the introduction (up to a constant multiplicative factor). Here is the precise statement.
\begin{thm}\label{thm:main_gnd}
Given any constant $\eps > 0$, let $k=k_n$ be such that eventually $ \eps \log n \le k \le n^{1/3-\eps}$. Then, there exists $m=m_n$ such that a.a.s.\ 
$$
\gamma( \G(n,m) ) = k \hspace{0.5in} \text{and} \hspace{0.5in} b( \G(n,m) ) = \Theta(\Delta( \G(n,m) )) = \Theta(m/n).
$$
\end{thm}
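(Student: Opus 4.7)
My approach is to work primarily in the $\sG(n,p)$ model and transfer the conclusion to $\G(n,m)$ through the standard coupling between the two models. Given $k$ in the stated range, I would first invoke the one-point concentration of the domination number promised as a side product in the abstract: this yields a value $p = p_n$ (satisfying~\eqref{eq:pdef}) with $\gamma(\sG(n,p)) = k$ a.a.s., which in turn pins down $m \sim p\binom{n}{2}$ with $\gamma(\G(n,m)) = k$ a.a.s. The upper bound $b(\G(n,m)) = O(m/n)$ is then immediate from $b(G) \le \Delta(G) + \delta(G) - 1$ combined with the standard Chernoff-plus-union-bound estimate $\Delta(\G(n,m)) \sim 2m/n$ a.a.s.

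The heart of the theorem is the matching lower bound $b(\G(n,m)) = \Omega(m/n)$. The idea is that $\sG(n,p)$ has very many size-$k$ dominating sets and each edge is critical for only a small fraction of them, so deleting a sublinear-in-$\Delta$ number of edges cannot destroy all of them. Concretely, let $X_k = |\cD_k|$ and, for each potential edge $e = \{u,v\}$, let $Z_e$ be the number of $D \in \cD_k$ that dominate $\sG(n,p)$ but not $\sG(n,p) - e$. A short case analysis shows that $Z_e$ counts dominating sets $D$ with $|D \cap \{u,v\}| = 1$ for which the vertex in $D \cap \{u,v\}$ is the unique $D$-neighbor of the other endpoint. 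Since
\[
|\cD_k(\sG(n,p) - B)| \;\ge\; X_k - \sum_{e \in B} Z_e \qquad \text{for every } B \subseteq E,
\]
it suffices to show that, a.a.s., $c \Delta \cdot \max_e Z_e < X_k$ for a sufficiently small constant $c > 0$; then no edge set $B$ of size at most $c \Delta$ can raise the domination number.

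Two concentration statements drive the argument: (i) $X_k = (1-o(1)) \ex X_k$ a.a.s., via a standard second moment computation summing over intersection sizes $j = |D \cap D'|$; and (ii) $\max_e Z_e = O(\ex Z_e)$ a.a.s., via a Janson/Chebyshev tail bound for a single $e$ followed by a union bound over the $\binom{n}{2}$ pairs. A direct calculation gives
\[
\frac{\ex Z_e}{\ex X_k} \;=\; \frac{2 k(n-k) \, p (1-p)^{k-1}}{n(n-1)\bigl(1 - (1-p)^k\bigr)} \;=\; O\!\left( \frac{p k (1-p)^{k-1}}{n} \right),
\]
which after multiplying by $\Delta = O(pn)$ yields $\Delta \cdot \ex Z_e / \ex X_k = O\bigl(p^2 k (1-p)^{k-1}\bigr)$. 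Because $p$ is chosen near the threshold where $r_n = k$, standard asymptotics give $(1-p)^k = \Theta(k \log(n/k)/n)$ and $pk = \Theta(\log(n/k))$, so this last product is $O(\log^2(n/k)/n) = o(1)$, comfortably absorbing the polylogarithmic slack needed by the union bound. The main obstacle I anticipate is part (ii): making the union bound over $\binom{n}{2}$ edges effective requires the tail $\pr(Z_e \ge (1+o(1)) \ex Z_e)$ to decay like $n^{-2-\Omega(1)}$, and bounding the second moment of $Z_e$ tightly enough to achieve this demands a careful analysis of pairs of overlapping size-$k$ dominating sets. This second-moment book-keeping, together with the analogous control needed for the one-point concentration of $\gamma$ invoked in the first paragraph, is where the hypothesis $k \le n^{1/3-\eps}$ is expected to be essential.
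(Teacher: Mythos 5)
Your high-level plan (tune $p$ so that $k$ becomes the domination number of $\sG(n,p)$, concentrate $X_k$, bound the damage done by each removed edge, transfer to $\G(n,m)$) matches the paper's, but the central counting step is wrong. You define $Z_e$ as the number of $D\in\cD_k$ that dominate $G$ but not $G-e$, and claim $|\cD_k(G-B)|\ge X_k-\sum_{e\in B}Z_e$. This inequality is false: a set $D$ can be destroyed by removing $B$ without being destroyed by any single edge of $B$. For instance, if $u\notin D$ has exactly two $D$-neighbours $v_1,v_2$ and $B\supseteq\{uv_1,uv_2\}$, then $D$ fails to dominate in $G-B$ but contributes $0$ to both $Z_{uv_1}$ and $Z_{uv_2}$, since neither $v_i$ is the \emph{unique} $D$-neighbour of $u$. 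An adversary choosing $B$ exploits exactly this, deleting all $D$-edges at vertices of small $D$-degree. This is why the paper replaces your $Z_e$ by the weighted ``damage'' $Z_{\overrightarrow{uv}}=\sum_j|\widehat\cD_{j,\overrightarrow{uv}}|/j$, which charges $1/j$ to each of the $j$ edges joining $u$ to $D$; only with these weights does the superadditive bound $Y_A\le Z_A$ of Lemma~\ref{lem:YZ} hold, and the subsequent split into light damage ($j>L$) and heavy damage ($j\le L$) is what makes the resulting sum controllable.

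Two further steps would not go through as described. First, your plan to establish $\max_e Z_e=O(\ex Z_e)$ by Chebyshev plus a union bound over $\binom{n}{2}$ pairs cannot work: a second-moment bound only yields tails of order $\log^{O(1)}n/(p^2n)$, nowhere near the $n^{-2-\Omega(1)}$ you correctly identify as necessary. The paper avoids controlling the maximum altogether; it bounds the \emph{aggregate} damage of all atypical (``bad'' directed pairs, ``exceptional'' vertices) via a dyadic decomposition plus Markov's inequality, and uses a deterministic bound for the typical ones, which is where the weight $1/j\le 1/L=O(1/(pr))$ enters. Second, ``$\gamma(\G(n,m))=k$ and $b(\G(n,m))\ge cm/n$'' is not a monotone property, so it does not transfer from $\sG(n,p)$ at a single $p$ by the standard coupling; the paper sandwiches $\hat m$ between two probabilities $p_-<p_+$, transferring the decreasing property $\gamma\ge k$ down from $p_+$ and the increasing property ``no $cp_-n$ edges destroy all size-$k$ dominating sets'' up from $p_-$. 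Relatedly, choosing $p$ merely ``near the threshold where $r_n=k$'' does not by itself guarantee the hypothesis $\ex X_k\ge e^{\omega\log n}$ of~\eqref{eq:condI} needed for one-point concentration (for a generic $p$ with $r=k$, $f(n,k,p)$ could be barely above $1/(pn)$); one must pin $p_+$ by $f(n,k-1,p_+)=1/(p_+n)$ and then use Lemma~\ref{lem:relations}(iii) at a slightly smaller $p_-$ to force $f(n,k,p_-)$ to be superpolynomially large.
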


\subsection{Binomial Random Graph}

Consider now the binomial random graph $\sG(n,p)$. Before we state the main result for this probability space, let us mention some technical difficulties one needs to deal with. Our one-point concentration result (below) on the domination number of $\sG(n,p)$ amounts to showing that a.a.s.\ $X_r\ge1$ (since, trivially, a.a.s.\ $X_i=0$ for all $i\le r-1$). Moreover, our claim about the bondage number requires that a.a.s.\ $X_r = \Omega(pn) \to \infty$. This follows from the fact that the number of dominating sets of minimum cardinality is an upper bound on the bondage number (since each such set $D$ must have a vertex $v\notin D$ adjacent to only one vertex in $D$, and thus $D$ can be neutralized by removing a single edge). Therefore, we will restrict ourselves to situations in which $\ex X_r$ is large enough and prove concentration of $X_r$ around its mean. For technical reasons of the argument, we will require the aforementioned condition to hold for two consecutive values $n-1$ and $n$ (see~\eqref{eq:condI} and~\eqref{eq:Iprime} in Theorem~\ref{thm:main_general}). This motivates the assumptions on the ratio $p_{n+1}/p_n$ in the statement of Theorem~\ref{thm:main_gnp}.

We point out that, even for ``natural'' functions satisfying our assumptions, such as $p_n=1/2$, it is not clear whether there are always many dominating sets of minimum cardinality, or rather $X_{r_n}$ oscillates reaching both small and large values as $n$ grows. All we managed to show is that, for such $p_n$, almost all values of $n$ satisfy~\eqref{eq:condI} and~\eqref{eq:Iprime} and thus yield the bondage number as large as possible. To make this precise, a set $I\subseteq\nat$ is said to be {\bf dense} if
\begin{equation}\label{eq:densedef}
\lim_{n\to\infty} \dfrac{|I\cap[n]|}{n}=1.
\end{equation}
In view of this definition, and recalling the definition of $r=r_n$ in~\eqref{eq:rdef}, our result for the binomial random graph can be stated as follows.
\begin{thm}\label{thm:main_gnp}
Given any constant $\eps > 0$, let $p=p_n$ be such that eventually $n^{-1/3+\eps} \le p \le 1-\eps$. Moreover, suppose there exists a non-increasing non-negative sequence $h=h_n$ such that $p_{n+1}/p_n=1-\Theta(h_n/n)$. Then, there exists a dense set $I\subseteq\nat$ such that, with asymptotics restricted to $n\in I$, a.a.s.\
$$
\gamma(\sG(n,p)) = r
\qquad\text{and}\qquad
b( \sG(n,p) ) = \Theta(\Delta( \G(n,p) )) = \Theta(pn).
$$
\end{thm}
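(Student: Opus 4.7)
The proof will proceed in four stages: (i) defining $I$ and establishing its density; (ii) confirming $\gamma(\sG(n,p))=r$; (iii) proving the bondage upper bound $b\le 2pn(1+o(1))$; and (iv) proving the matching lower bound. Stages (i)--(iii) should be fairly routine; stage (iv) is where the real work lies.

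I will take $I$ to consist of those $n$ for which $\ex X_{r_n}$ is substantially larger than the threshold $1/(pn)$ defining $r_n$, with the same requirement also imposed at $n-1$ (the two-sided condition is flagged in the discussion preceding the theorem). Density of $I$ should follow from the regularity assumption $p_{n+1}/p_n = 1-\Theta(h_n/n)$: $\ex X_r$ changes by a predictable multiplicative factor from $n$ to $n+1$, so the values of $n$ at which $r_n$ jumps (and $\ex X_r$ re-sets close to $1/(pn)$) are well-spaced, and almost every $n$ lies on a plateau of $r_n$ on which $\ex X_r$ is already much larger than the threshold. Once $n\in I$, the equality $\gamma(\sG(n,p))=r$ is routine: minimality of $r$ gives $\ex X_{r-1}\le 1/(pn)=o(1)$, so Markov yields $\gamma\ge r$ a.a.s.; and since $\ex X_r\to\infty$ on $I$, a standard second moment computation (grouping pairs of $r$-sets by intersection size and showing the correlation term is $o((\ex X_r)^2)$) gives $X_r\ge 1$ a.a.s.\ by Chebyshev, hence $\gamma\le r$. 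The bondage upper bound $b\le 2pn(1+o(1))$ is immediate from \eqref{eq:trivialupper1} and the a.a.s.\ estimate $\Delta,\delta\sim pn$.

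For the lower bound $b=\Omega(pn)$, the goal is to show that for some fixed $c>0$, no $B\subseteq E(\sG(n,p))$ with $|B|\le cpn$ can kill every dominating set of size $r$. A clean reduction is as follows: letting $V_B$ denote the set of endpoints of edges in $B$, so $|V_B|\le 2cpn$, any $D\in\cD_r(G)$ with $D\cap V_B=\emptyset$ automatically survives the deletion of $B$, because no edge of $B$ has an endpoint in $D$ and so $D$'s neighborhoods in $G$ and in $G-B$ coincide. It therefore suffices to prove that a.a.s., for every $V_B\subseteq V$ with $|V_B|\le 2cpn$, $G$ contains a dominating set of size $r$ avoiding $V_B$. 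The expected number of such sets equals $\binom{n-|V_B|}{r}(1-(1-p)^r)^{n-r}=n^{-O(c)}\,\ex X_r$, which stays large provided $\ex X_r$ is sufficiently large --- that is, for $n\in I$ and $c$ small.

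The main obstacle is promoting this per-$V_B$ estimate into a uniform statement over all $V_B$, of which there are $\binom{n}{\lfloor 2cpn\rfloor}$. I expect this to require a Janson-type inequality for the restricted counting variable (giving $\Pr[\text{no avoiding }D]\le\exp(-\Omega(\mu))$ with $\mu$ the expected count) combined with a union bound whose logarithmic size must be dominated by $\mu$; this is precisely where the strength of the conditions defining $I$ enters, and is plausibly also where the twin condition at $n-1$ comes in, perhaps through a comparison to $\sG(n-1,p)$ that absorbs the vertex deletion into a dimension shift. An alternative route would be to condition on $M=|E(\sG(n,p))|$ and invoke Theorem~\ref{thm:main_gnd}, relying on $I$ to place $M$ a.a.s.\ into the regime it covers.
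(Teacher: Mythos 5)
Your stages (i)--(iii) are essentially the paper's argument: the paper defines $I'=\{n: f(n,r_n,p_n)\ge\exp(\omega_n\log n)\}$ with $\omega_n=\log\log n$, uses the regularity assumption $p_{n+1}/p_n=1-\Theta(h_n/n)$ to show that each maximal gap $[n_1,n_2-1]$ outside $I'$ is followed by an interval $[n_2,n_3]\subseteq I'$ longer by a factor $\log n_1/\omega_{n_1}^3$ (so $I'$ is dense), and then feeds this into Theorem~\ref{thm:main_general}; the claim $\gamma=r$ is exactly the second-moment computation you describe (Corollary~\ref{cor:small_variance}), and the upper bound on $b$ is~\eqref{eq:trivialupper1}. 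Your reduction in stage (iv) is also sound as far as it goes: if $D\cap V_B=\emptyset$ then $D$ survives the deletion of $B$, and the expected number of $r$-dominating sets avoiding a fixed $V_B$ is indeed $n^{-O(c)}\ex X_r$, which is large on $I$.

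The genuine gap is the step you flag as "the main obstacle," and it is not a technicality. To beat the union bound over $\binom{n}{\lfloor 2cpn\rfloor}=\exp(\Theta(pn\log n))$ choices of $V_B$ you need, for each fixed $V_B$, a failure probability of order $\exp(-\Omega(pn\log n))$. No tool in the paper, and no standard tool, delivers this: Janson's inequality applies to unions of \emph{principal} up-sets (events of the form "all edges of $S_i$ are present"), whereas "$D$ dominates" is $\bigcap_{w\notin D}\bigcup_{u\in D}\{uw\in E\}$ and is not of that form; and the second-moment method, which is what actually controls $X_r$ here, only gives $\Pr[\text{no avoiding }D]\le\var/(\ex)^2=O(\log^2 n/(p^2n))$ --- polynomially small, exponentially far from what the union bound requires. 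For $p=\Theta(1)$ it is not even clear the exponential bound you want is true with a usable constant. The paper's proof of the lower bound (inside Theorem~\ref{thm:main_general}) is built precisely to avoid any union bound over $B$: it replaces "number of dominating sets destroyed by $B$" with the superadditive \emph{damage} functional $Z_A=\sum_{e\in A}Z_e$ (Lemma~\ref{lem:YZ}), splits it into heavy and light parts, classifies directed pairs and vertices dyadically by how large their damage is, bounds the \emph{total} damage of all bad pairs/exceptional vertices in the whole graph by a single application of Markov's inequality, and bounds the contribution of good/normal ones deterministically by $|A|$ times a pointwise cap. This is where the condition at $n-1$ enters (via $\sG(n,p)-u\sim\sG(n-1,p)$ in Lemma~\ref{lem:var2}), not through a dimension-shifted Janson argument. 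Your fallback of conditioning on $M=|E|$ and invoking Theorem~\ref{thm:main_gnd} is also unavailable: that theorem asserts existence of \emph{some} $m$ with the stated properties (and is itself deduced from Theorem~\ref{thm:main_general}), so using it here would be both circular and insufficient.
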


Although the conditions on $p_n$ in Theorem~\ref{thm:main_gnp} seem restrictive, many common and natural probability functions $p_n$ satisfy it. For example, $p_n=n^{-1/4}$, $p_n=1/\log \log n$ and $p_n=1/2$ meet the requirements (by picking $h_n=1$, $h_n=1/\log n$ and $h_n=0$, respectively). Other, seemingly more complicated, choices such as $p_n = (n+1)^{-1/4}\log^3 n + n^{-1/3}$ also satisfy our conditions. On the other hand, mixed behaviours such as
\[
p_n=\begin{cases}n^{-1/4}&\text{$n$ even}\\1/\log \log n&\text{$n$ odd}\end{cases}
\]
are not considered here. One can easily relax the conditions on $p_n$ a bit further, but we do not aim for it, as it does not appear to be possible to express that in terms of any ``natural" assumptions such as ``$p_n$ being non-decreasing".

\subsection{General Result}

In fact, both Theorem~\ref{thm:main_gnd} and Theorem~\ref{thm:main_gnp} are implied by the following, slightly more general, result. It is known that even for sparser graphs (namely, for $p = p_n \gg \log^2 n/\sqrt{n}$, but bounded away from~1) a.a.s.\ the domination number of $\sG(n,p)$ takes one out of two consecutive integer values, $r$ or $r+1$, where $r=r_n$ is defined in~\eqref{eq:rdef} (see~\cite{GLS} and also~\cite{WG} for an earlier paper where denser graphs were considered). The next result shows that if $f(n,r,p)$ (that is, the expected number of dominating sets of cardinality $r$) is large, then we actually have one-point concentration and the bondage number is of order $pn$. Note that we may have to restrict asymptotics to an infinite subset of $\nat$ that guarantees our assumptions on $f$.

\begin{thm}\label{thm:main_general}
Given any constant $\eps>0$, suppose that $p=p_n$ eventually satisfies $n^{-1/3+\eps} \leq p \leq 1-\eps$, and let $f$ and $r$ be defined as in~\eqref{eq:fdef} and~\eqref{eq:rdef}. Suppose that there exists an infinite set $I'\subseteq\nat$ and $\omega =\omega_n \to\infty$ such that 
\begin{equation}\label{eq:condI}
\ex X_r = f(n,r,p) \geq \exp\left(\omega \log n\right)
\qquad
\text{(for $n\in I'$).
}
\end{equation}
Then, a.a.s.\ 
\[
\gamma( \sG(n,p) ) = r
\qquad
\text{(for $n\in I'$).
}
\]
Moreover, suppose that
\begin{equation}\label{eq:Iprime}
I = \{n\in\nat : n\in I',\; n-1\in I'\}
\qquad
\text{has infinite cardinality.}
\end{equation}
Then a.a.s.\
\[
b( \sG(n,p) ) = \Theta(\Delta( \sG(n,p) )) = \Theta(pn)\qquad
\text{(for $n\in I$).
}
\]
\end{thm}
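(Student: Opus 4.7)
My plan decomposes the theorem into the domination-number claim and the bondage claim.

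\textbf{Domination number.}
The lower bound $\gamma\ge r$ follows from Markov's inequality: by the minimality defining $r$ in~\eqref{eq:rdef}, $\ex X_{r-1}\le 1/(pn)=o(1)$, so $X_{r-1}=0$ a.a.s. For $\gamma\le r$, I apply the second moment method to $X_r$: splitting $\ex X_r^2$ by $t=|D\cap D'|$ and using $p\ge n^{-1/3+\eps}$ to control the overlap contributions yields $\ex X_r^2=(1+o(1))(\ex X_r)^2$; combined with $\ex X_r\ge\exp(\omega\log n)$ from~\eqref{eq:condI}, Chebyshev gives $X_r\ge 1$ a.a.s., hence $\gamma=r$ a.a.s.\ for $n\in I'$.

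\textbf{Bondage.}
The upper bound $b\le(2+o(1))pn$ is the deterministic inequality $b\le\Delta+\delta-1$ together with Chernoff-based concentration $\Delta,\delta\sim pn$ a.a.s. The lower bound $b\ge cpn$ amounts to: a.a.s., for every $B\subseteq E(G)$ with $|B|\le cpn$, the graph $G-B$ still has a dominating set of size $r$. My plan is a union bound over all $B\subseteq\binom{[n]}{2}$ of size $\lfloor cpn\rfloor$, bounding $\pr[B\subseteq E(G),\ X_r(G-B)=0]$ for each. For a fixed $B$, $\pr[B\subseteq E(G)]=p^{|B|}$; conditional on this, $G-B$ is distributed as $\sG(n,p)$ restricted to $\binom{[n]}{2}\setminus B$, with $\ex[X_r(G-B)\mid B\subseteq E(G)]=(1+o(1))f(n,r,p)\ge\exp(\omega\log n)$ (since $B$ touches only an $O(cp)$ fraction of $r$-subsets). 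A second-moment argument analogous to the domination part bounds $\pr[X_r(G-B)=0\mid B\subseteq E(G)]$.

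\textbf{Main obstacle.}
The union bound carries an entropy cost $\binom{\binom{n}{2}}{cpn}p^{cpn}=\exp(\Theta(cpn\log n))$, whereas a naive Chebyshev bound only yields $\pr[X_r(G-B)=0]\le O(\exp(-\omega\log n))$, which does not beat the entropy cost when $\omega$ grows slowly. To close this gap I plan to split the adversary's attack $B$ into two regimes. In regime (i), where $B$ leaves every vertex incident to $\ge(1-\eps')pn$ edges outside $B$, a direct counting of ``broken'' dominating sets via the inequality $\sum_{v\notin D}b_v(D)\le 2|B|$ (where $b_v(D)=|B\cap E(v,D)|$) shows that most of the $X_r$ dominating sets of $G$ survive in $G-B$. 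In regime (ii), where some vertex $v$ loses enough edges that $|B|\ge\eps'pn$ is already forced, I plan to use $n-1\in I'$ from~\eqref{eq:Iprime} to construct a dominating set of $G-B$ that contains $v$, leveraging a guarantee on the $(n-1)$-vertex subgraph $G-v$ analogous to the domination-number proof above. Controlling the interplay between these two regimes, and between the parameters at $n$ and $n-1$, is the delicate step.
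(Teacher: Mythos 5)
Your treatment of the domination number and of the upper bound on $b$ coincides with the paper's (second moment method on $X_r$ via the overlap decomposition, plus the deterministic bound $b\le\Delta+\delta-1$ and degree concentration), and you correctly identify that a naive union bound over all edge sets $B$ of size $cpn$ cannot work because the entropy cost $\exp(\Theta(pn\log n))$ overwhelms any Chebyshev-type failure probability of order $\exp(-\omega\log n)$. However, the mechanism you propose to escape this is where the gap lies. Your regime split by vertex degrees does not address the real danger: an adversary with budget $cpn$ can leave every vertex with $(1-o(1))pn$ incident edges and still concentrate on the $\Theta(\log n)$ edges joining a single vertex $u$ to its neighbourhood inside many different minimum dominating sets simultaneously, breaking all of them at once. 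The inequality $\sum_{v\notin D}b_v(D)\le 2|B|$ is stated per dominating set $D$ and only says that $D$ loses few edges in total; it gives no bound on the \emph{number} of $D$'s that are broken, because breaking $D$ requires only the $|N(v)\cap D|\approx pr$ edges from one vertex $v$ to $D$, and those same few edges can lie in $E(v,D)$ for exponentially many $D$. (Moreover, your regime (ii) is vacuous once $c<\eps'$, so it cannot absorb this case, and the use of $n-1\in I'$ you sketch there is not the role that hypothesis actually needs to play.)

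What is missing is a per-edge, additive functional that dominates the number of destroyed dominating sets uniformly in $B$. The paper assigns to each directed pair $\overrightarrow{uv}$ the ``damage'' $Z_{\overrightarrow{uv}}=\sum_j|\widehat\cD_{j,\overrightarrow{uv}}|/j$, where $\widehat\cD_{j,\overrightarrow{uv}}$ collects dominating sets $D\ni v$ with $u\notin D$ and $|N(u)\cap D|=j$: if $D$ is broken by $B$ via witness $u$, each of the $j=|N(u)\cap D|$ removed edges contributes $1/j$, so $Y_B\le\sum_{e\in B}Z_e$ holds deterministically and no union bound over $B$ is needed --- one only has to control $Z_e$ simultaneously for all $\binom{n}{2}$ pairs. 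Even then, the terms with $j\le L=\lfloor\eps^2 pr\rfloor$ (the ``heavy damage'') carry dangerously large weights $1/j$ and require a separate variance computation; this is precisely where $n-1\in I'$ enters, since $Z''_{\overrightarrow{uv}}$ is analysed through $\sG(n,p)-u\sim\sG(n-1,p)$. The $j>L$ terms are controlled through the counts $Z_v$ of dominating sets containing a given vertex. Without this additive majorant and its light/heavy decomposition, your plan does not close.
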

\begin{remark}\hspace{0cm}
\begin{itemize}
\item[(i)]
In many applications of Theorem~\ref{thm:main_general} (for instance, in the proofs of Theorems~\ref{thm:main_gnd} and~\ref{thm:main_gnp}), $I'$ is a dense subset of $\nat$. Then, automatically $I$ is also dense, and thus has infinite cardinality as required. 
\item[(ii)]
The first part of the theorem, which characterizes the domination number of $\sG(n,p)$, holds in fact for any $p=p_n$ satisfying $\log^2 n/\sqrt{n} \ll p\le 1-\eps$ (see Corollary~\ref{cor:small_variance} below). 
\end{itemize}
\end{remark}

\bigskip

The paper is structured as follows. In Section~\ref{sec:prelim}, we show that the results for $\G(n,m)$ and $\sG(n,p)$ can be obtained from Theorem~\ref{thm:main_general}. Section~\ref{sec:second} develops some tools required to estimate the second moment of $X_r$ and some other random variables. Finally, Section~\ref{sec:theorem} is devoted to prove Theorem~\ref{thm:main_general}. 

\section{Preliminaries}\label{sec:prelim}

In this section we are going to introduce a few inequalities used in the paper, and we show some properties of the functions $r=r_n$ and $f(n,r,p)$ defined in~(\ref{eq:fdef}) and~(\ref{eq:rdef}). The function $p$ will be assumed to satisfy~(\ref{eq:pdef}). We will also show that Theorem~\ref{thm:main_gnd} and Theorem~\ref{thm:main_gnp} are implied by Theorem~\ref{thm:main_general}.

\bigskip

We will use the following version of Chernoff bound (see~e.g.~\cite{JLR}):
\begin{lem}[\textbf{Chernoff Bound}] 
If $W$ is a binomial random variable with expectation $\mu$, and $0<\delta<1$, then, setting $\varphi(x)=(1+x)\log(1+x)-x$ for $x \geq -1$ (and $\varphi(x)=\infty$ for $x < -1$),
\begin{equation}\label{eq:chernoffstrong}
\pr [W < (1-\delta)\mu]  \le  \exp \left(-\mu \varphi(-\delta) \right) \le \exp \left( -\frac{\delta^2 \mu}{2} \right);
\end{equation}
and if $\delta > 0$,
\begin{equation}\label{eq:chernoffnormal}
\pr [ W > (1+\delta)\mu] \le \exp\left(-\frac{\delta^2 \mu}{2+\delta}\right).
\end{equation}
\end{lem}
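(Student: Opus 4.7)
The plan is to use the standard exponential moment (Chernoff) method. Write $W=\sum_{i=1}^N Y_i$ where the $Y_i$ are independent Bernoulli with success probability $q$, so $\mu=Nq$. For any real $t$, by independence and the elementary bound $1+x\leq e^x$,
\[
\ex e^{tW}=\prod_{i=1}^N\bigl(1+q(e^t-1)\bigr)\leq \exp\bigl(\mu(e^t-1)\bigr).
\]
Markov's inequality applied to $e^{tW}$ then yields, for $t>0$ and any real $a$, $\pr[W\geq a]\leq \exp(\mu(e^t-1)-ta)$, and analogously for $t<0$ and the lower tail. This reduces everything to an optimization in $t$.

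For the upper tail with $a=(1+\delta)\mu$, the exponent $\mu(e^t-1)-t(1+\delta)\mu$ is minimized (by setting its derivative to zero) at $t=\log(1+\delta)>0$, giving the value $-\mu\varphi(\delta)$. Exactly the same calculation with $a=(1-\delta)\mu$ and optimizer $t=\log(1-\delta)<0$ produces $\pr[W<(1-\delta)\mu]\leq\exp(-\mu\varphi(-\delta))$, which is the first inequality in~\eqref{eq:chernoffstrong}.

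It remains to establish the two analytic inequalities
\[
\varphi(-\delta)\geq \frac{\delta^2}{2}\quad(0<\delta<1)\qquad\text{and}\qquad \varphi(\delta)\geq \frac{\delta^2}{2+\delta}\quad(\delta>0).
\]
For the first, expand $(1-\delta)\log(1-\delta)+\delta$ as a power series in $\delta$: the $\delta^0$ and $\delta^1$ terms cancel and what remains is $\sum_{k\geq 2}\frac{\delta^k}{k(k-1)}\geq \frac{\delta^2}{2}$. For the second, set $g(\delta)=(2+\delta)\varphi(\delta)-\delta^2$; then $g(0)=0$ and a direct differentiation (together with $g'(0)=0$ and the fact that $g''(\delta)=\log(1+\delta)\cdot\text{(positive)}\geq 0$ for $\delta\geq 0$, or alternatively a second power-series check) shows $g\geq 0$ on $[0,\infty)$.

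The main obstacle, if any, is the sharpness of the second analytic inequality $\varphi(\delta)\geq \delta^2/(2+\delta)$; the probabilistic half of the argument is completely routine once the MGF bound $\ex e^{tW}\leq \exp(\mu(e^t-1))$ is in hand. Everything else follows from convexity/Taylor expansion and a single one-variable optimization, so no additional machinery is needed.
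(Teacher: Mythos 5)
Your proof is correct and is the standard exponential-moment (Cram\'er--Chernoff) argument; the paper does not prove this lemma at all but simply cites \cite{JLR}, where essentially this same derivation appears, so there is nothing to compare beyond noting that your MGF bound, the optimizations at $t=\log(1\pm\delta)$, and the two analytic inequalities are all sound. One small correction to a parenthetical: a direct computation gives $g''(\delta)=2\log(1+\delta)-\delta/(1+\delta)$, which is nonnegative because $\log(1+\delta)\ge\delta/(1+\delta)$ for $\delta\ge0$, rather than being literally $\log(1+\delta)$ times a positive factor --- but your conclusion $g''\ge0$, and hence $\varphi(\delta)\ge\delta^2/(2+\delta)$, stands.
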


\bigskip

Given $p = p_n \in[0,1)$, define $\p=\log\frac{1}{1-p}$. Note that $\p \ge p$ (with equality only holding at $p=0$), and
\begin{equation}\label{eq:hatp}
 \begin{cases}
\p \sim p
& \text{if $p=o(1)$,}
\\
\p = \Theta(1)
& \text{if $p=\Theta(1)$ and $1-p=\Theta(1)$,}
\\
\p \to \infty
& \text{if $p\to1$.}
\end{cases}
\end{equation}

We start with a few simple observations. Let us mention that some of the properties we show below are known and can be found in, for example,~\cite[Observation 2.1]{GLS} (but mainly for $p=o(1)$). We present the proof here for completeness and to prepare the reader for similar calculations later on.

\begin{lem}\label{lem:relations}
Assume $\log^2 n/\sqrt n \ll p \leq 1-\varepsilon$ for some constant $\varepsilon > 0$, and let $r$ be defined as in~\eqref{eq:rdef}. Then, the following holds:
\begin{enumerate}
\item[(i)]
\[
r = \left\lceil \frac1\p\log\left(\frac{\p n}{\log^2(pn)}(1+o(1))\right) \right\rceil =
\begin{cases}
\displaystyle \frac1\p\log\left(\frac{\p n}{\log^2(pn)}(1+o(1))\right)
& \text{if $p=o(1)$}
\\[1.2em]
\displaystyle \frac1\p\log\left(\alpha \frac{\p n}{\log^2(pn)} \right)
& \text{if $p=\Omega(1)$,}
\end{cases}
\]
for some $1+o(1) \le \alpha \le \frac{1+o(1)}{1-p}=\Theta(1)$.

\item[(ii)] 
\[
r = \Theta\left(\frac{\log n}{p}\right)
\qquad\text{and}\qquad
(1-p)^r = \Theta\left(\frac{\log^2 n}{pn}\right).
\]
In particular, $r=\Omega(\log n)$ and $r=o(\sqrt n/\log n)$.
\item[(iii)] Moreover, if $k = r + O(1)$, then
$$
\frac{f(n,k+1,p)}{f(n,k,p)} = \exp(\Theta(\log^2 n)).
$$
\end{enumerate}
\end{lem}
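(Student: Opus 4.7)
My plan is to establish Part (i) first (whose intermediate estimates will double for Part (ii)), then read off Part (ii) as an immediate corollary, and finally prove Part (iii) by a direct ratio computation.

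\textbf{Part (i):} The central step is to obtain a tight asymptotic expansion of $\log f(n,k,p) = \log\binom{n}{k} + (n-k)\log(1-(1-p)^k)$ in the window where $r$ is anticipated, namely $k = \Theta(\log n/p) = o(\sqrt n)$. Setting $\hat p = \log(1/(1-p))$ so that $(1-p)^k = e^{-\hat p k}$, I would use Stirling's formula and the Taylor expansion $\log(1-x) = -x + O(x^2)$ to obtain
\[
\log f(n,k,p) = k\log(en/k) - \tfrac12 \log(2\pi k) - (n-k)e^{-\hat p k} + \text{(lower-order error)}.
\]
Under the hypothesis $p \gg \log^2 n/\sqrt n$, the error contributions (namely $O(k^2/n)$ from Stirling and $O((n-k)e^{-2\hat p k})$ from Taylor) will be verified to be comfortably smaller than $\log(pn)$.

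Next, plugging the ansatz $k = \hat p^{-1}\log(\hat p n/\log^2(pn)) + c$ and demanding $\log f(n,k,p) = -\log(pn)$, most leading terms cancel, yielding $|c| = O(1)$ and hence $r = \hat p^{-1}\log(\hat p n/\log^2(pn)) + O(1)$. To extract the exact form of (i), I would invoke a rapid-growth property of $f$ (essentially a precursor of Part (iii)) to pin down $r$ as an integer. When $p = o(1)$, $\hat p = o(1)$, so any $O(1)$ additive shift in $\hat p r$ becomes a multiplicative factor $e^{O(\hat p)} = 1 + o(1)$ inside the logarithm, producing the first displayed formula. When $p = \Omega(1)$, $\hat p$ is bounded away from $0$, so $\hat p r$ takes a specific value modulo $\hat p$; the admissible consecutive integer values of $r$ correspond to prefactors $\alpha$ differing by a factor of $e^{\hat p} = 1/(1-p)$, giving exactly the stated interval $[1+o(1),\,(1+o(1))/(1-p)]$.

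\textbf{Part (ii):} This follows directly from Part (i). Using $\hat p = \Theta(p)$ (valid since $p \le 1 - \eps$) and $\log(pn) = \Theta(\log n)$ (valid since $p \geq n^{-1/3+\eps}$), we read off $r = \Theta(\log n/p)$ and $(1-p)^r = e^{-\hat p r} = \Theta(\log^2 n/(pn))$. The complementary estimates $r = \Omega(\log n)$ (from $p = O(1)$) and $r = o(\sqrt n/\log n)$ (from $p \gg \log^2 n/\sqrt n$) are immediate.

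\textbf{Part (iii):} Setting $q = (1-p)^k$, I would expand
\[
\frac{f(n,k+1,p)}{f(n,k,p)} = \frac{n-k}{k+1} \cdot \frac{1}{1-q} \cdot \left(1 + \frac{pq}{1-q}\right)^{n-k-1}.
\]
Taking logarithm, the dominant term is $(n-k-1)\log(1+pq/(1-q)) \sim npq$. For $k = r + O(1)$, Part (ii) gives $q = \Theta(\log^2 n/(pn))$, so $npq = \Theta(\log^2 n)$, whereas $\log((n-k)/(k+1)) - \log(1-q)$ contributes only $\Theta(\log n)$. Hence the logarithm of the ratio is $\Theta(\log^2 n)$.

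The main obstacle is the careful error bookkeeping in Part (i): verifying under the relatively weak hypothesis $p \gg \log^2 n/\sqrt n$ that \emph{all} lower-order terms in the Stirling and Taylor expansions are uniformly negligible compared to $\log(pn)$, and reconciling the two regimes $p = o(1)$ and $p = \Omega(1)$ to produce the precise forms displayed in the lemma.
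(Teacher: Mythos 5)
Your proposal is correct and follows essentially the same route as the paper: both expand $\log f(n,k,p)$ via Stirling's formula and the approximation $(1-(1-p)^k)^{n-k}\approx e^{-(n-k)(1-p)^k}$ in the window $k=\Theta(\log n/p)$, locate the crossing of the threshold $1/(pn)$, use the steepness of $f$ (its $\exp(\Theta(\log^2 n))$ per-step growth, which dwarfs all error terms) to resolve the integrality and the two regimes $p=o(1)$ versus $p=\Omega(1)$, and prove (iii) by the identical ratio computation $\Theta(pn/\log n)\exp(\Theta(pn(1-p)^k))$. The only cosmetic difference is that the paper sandwiches $r$ between the explicit candidates $s_{g}$ with $g=\pm(\log\log n)^2/\log n$ rather than solving for a real crossing point and rounding, but the underlying estimates are the same.
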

\begin{proof}
For a given function $g=g_n =o(1)$, we define 
$$
s_g = s_g(n,p) = \left\lceil \frac1\p\log\left(\frac{\p n}{\log^2(pn) (1+g_n)}\right) \right\rceil. 
$$
First, observe that for $p$ in the range of discussion, $\log (pn) = \Theta(\log n)$. Then, it follows from~(\ref{eq:hatp}) that
\begin{equation}\label{eq:s_g}
s_g = \frac {1}{\Theta(p)} \Big( \Theta(\log (pn)) - O(\log \log (pn)) \Big) = \Theta \left( \frac {\log n}{p} \right).
\end{equation}
Also, from the definition of $\p$ and~\eqref{eq:hatp}, we obtain $(1-p)^{s_g}= \Theta\left(\frac{\log^2 n}{pn}\right).$ Hence, part~(ii) will follow, once we show that $r=s_g$ for some function $g_n =o(1)$. In particular, proving part~(i) will automatically yield part~(ii).

Given any $g_n=o(1)$, define $g^-_n$ and $g^+_n$ by
\[
s_g =  \frac1\p\log\left(\frac{\p n}{\log^2(pn) (1+g^-_n)}\right) 
\qquad\text{and}\qquad
s_g - 1 =  \frac1\p\log\left(\frac{\p n}{\log^2(pn) (1+g^+_n)}\right).
\]
Since $z\le \lceil z\rceil < z+1$ for any $z\in\real$, we obtain that $g^-_n \le g_n \le g^+_n.$ 
%\dmc{I think these equations are not needed, from the definitions it is clear to me. I commented out your equations. Later we don't seem to need them either, I think}
%
%
%\begin{equation}\label{eq:g+-}
%\frac{1}{1+g_n} \le
%\frac{1}{1+g^-_n} <
%\frac{1}{(1+g_n)(1-p)}
%\qquad\text{and}\qquad
%\frac{1-p}{1+g_n} \le \frac{1}{1+g^+_n} < \frac{1}{1+g_n},
%\end{equation}
%so in particular, 
%\[
%g^-_n \le g_n \le g^+_n.
%\]

Now we proceed to estimate $f(n,s_g,p)$ and $f(n,s_g-1,p)$ for any $g_n=o(1)$.
Denoting by $[n]_k=n(n-1)\ldots(n-k+1)$, and using Stirling's formula ($k! \sim \sqrt{2\pi k}(k/e)^k$), we observe that
\begin{eqnarray*}
f(n,s_g,p) &=& \frac {[n]_{s_g}}{s_g!} \left( 1 - \frac {\log^2 (pn)}{\p n} (1+ g^-_n ) \right)^{n(1-s_g/n)} \\
&=& \frac {n^{s_g} (1+O(s_g/n))^{s_g} }{(1+o(1)) \sqrt{2\pi s_g} (s_g/e)^{s_g}} \exp \left( - \frac {\log^2 (pn)}{\p} \left( 1+ g^-_n + O \left(\frac {\log^2 n}{pn} \right) + O \left( \frac {s_g}{n} \right) \right) \right) \\
&=& \frac {1+o(1)}{\sqrt{2\pi s_g}} \exp \left( s_g \log \left( \frac {ne}{s_g} \right) - \frac {\log^2 (pn)}{\p} \left( 1+ g^-_n + O \left(\frac {\log^2 n}{pn} \right) \right) \right),
\end{eqnarray*}
since $s_g = \Theta( \log n / p )$ (by~\eqref{eq:s_g}) and $p \gg \log^2 n / \sqrt{n}$, which implies that 
$$
(1+O(s_g/n))^{s_g} = e^{O(s_g^2/n)} = e^{O(\log^2 n / (p^2 n))} \sim 1
$$ 
and $s_g/n = O(\log n / (pn)) = o(\log^2 n / (pn))$. Hence,
\begin{eqnarray}
f(n,s_g,p) &\sim& \frac {1}{\sqrt{2\pi s_g}} \exp \Bigg( \frac {\log (pn) + O(\log \log n)}{\p} \left( \log \left( \frac {pn}{\log n} \right) + O(1) \right)
\notag\\
&& \hspace{9em}{ } - \frac {\log^2 (pn)}{\p} \left( 1+ g^-_n + O \left(\frac {\log^2 n}{pn} \right) \right) \Bigg)
\notag\\
&=& \Theta \left( \sqrt{ \frac{p}{\log n} } \right) \exp \Bigg( - \frac {\log^2 (pn)}{\p} \left( g^-_n + O \left(\frac {\log \log n}{\log n} \right) \right) \Bigg).
\label{eq:fnsgp}
\end{eqnarray}
Moreover, the same calculations leading to~\eqref{eq:fnsgp} are valid if we replace $s_g$ by $s_g-1$ and $g^-_n$ by $g^+_n$, so we also get
\begin{equation}
f(n,s_g-1,p) = \Theta \left( \sqrt{ \frac{p}{\log n} } \right) \exp \Bigg( - \frac {\log^2 (pn)}{\p} \left( g^+_n + O \left(\frac {\log \log n}{\log n} \right) \right) \Bigg).
\label{eq:fnsgp2}
\end{equation}
In order to prove part~(ii), we first take $g_n = (\log \log n)^2 / \log n$. From~\eqref{eq:fnsgp2} and since $g^+_n \ge g_n$, we obtain

\[
f(n, s_g-1 ,p) \le \exp \left( - \Omega( (\log^2 \log n) \log(pn) ) \right) = o(1/(pn)),
\]
and thus
\[
f(n,j,p) < 1/(pn)
\qquad
\text{for all}\quad
1\le j\le s_g-1,
\]
since $f(n,j,p)$ is increasing with respect to $j$ in that range (this can be easily checked by looking at the ratio $f(n,j+1,p)/f(n,j,p)$ for $j = O(\log n/p) = o(\sqrt n)$).
Therefore, $r > s_g - 1$ and, since both $r$ and $s_g$ are natural numbers, $r \ge s_g =  s_{(\log \log n)^2 / \log n}$.
On the other hand, if we set $g_n = -(\log \log n)^2 / \log n$ then, by~\eqref{eq:fnsgp} and since $g^-_n \le g_n$,
\[
f(n,s_g,p) \gg  n^{-1/4} \sqrt{\log n} \cdot \exp \left( \Omega( (\log^2 \log n) \log(p n)) \right) > 1/(pn),
\]
and hence $r \le s_{- (\log \log n)^2 / \log n}$. Combining the two bounds, we conclude that
\[
r = s_g
\qquad\text{for some}\quad
-(\log \log n)^2 / \log n \le g_n \le (\log \log n)^2 / \log n,
\]
which implies the first equality in part~(i). The second equality follows immediately from setting $\alpha=1/(1+g^-_n)$ and the fact that $\frac{1}{1+g_n} \le \frac{1}{1+g^-_n} < \frac{1}{(1+g_n)(1-p)}$.

Finally, let us move to part (iii). Using part (ii), it is easy to see that for $k = r +O(1)$ we get
\begin{eqnarray*}
\frac{f(n,k+1,p)}{f(n,k,p)} & = & \frac {[n]_{k+1} / (k+1)!}{[n]_k / k!} \left( \frac { 1-(1-p)^{k+1} } { 1-(1-p)^{k} } \right)^{n-k} \left(1-(1-p)^{k+1}\right)^{-1} \\
&\sim& \frac {n}{k} \left( \frac { 1-(1-p)^{k} + p(1-p)^{k}  } { 1-(1-p)^{k} } \right)^{n-k}  \\
&=& \Theta \left( \frac{pn}{\log n} \right) \left( 1 + \Theta(p(1-p)^{k}) \right)^{n-k}  \\
&=& \Theta \left( \frac{pn}{\log n} \right) \exp \left( \Theta(pn(1-p)^{k}) \right) \\
&=& \Theta \left( \frac{pn}{\log n} \right) \exp \left( \Theta( \log^2 n ) \right) = \exp \left( \Theta( \log^2 n ) \right).
\end{eqnarray*}
This finishes the proof of the lemma.
\end{proof}

Now, we will show that Theorem~\ref{thm:main_gnd} can be obtained from Theorem~\ref{thm:main_general}.
\begin{proof}[Proof of Theorem~\ref{thm:main_gnd}]
Let $k=k_n$ be such that $\eps \log n \le k \le n^{1/3-\eps}$ for some $\eps > 0$. Our goal is to show that there exists $m = m_n \in \nat$ such that a.a.s.\ $\gamma( \G(n,m) ) = k$ and $b( \G(n,m) ) = \Theta(\Delta( \G(n,m) )) = \Theta(m/n)$. We assume that Theorem~\ref{thm:main_general} holds and we will use the probability space $\sG(n,p)$ to get the result.

It follows immediately from definition~\eqref{eq:fdef} that, for $1\le j < n$, $f(n,j,p)$ is both a continuous and increasing function of $p$, taking all values between $0$ and $\binom{n}{j}$. Then, given $n\in\nat$ (sufficiently large), we can define $p_+$ to be such that
\begin{equation}\label{eq:ppdef}
f(n,k-1,p_+) = 1/(p_+ n).%\frac{1+e^{-n}}{p_+ n}.
\end{equation}
Moreover, straightforward computations show that, for $0<p<1$ and $0\le j\le n/4$,
\begin{equation}\label{eq:easyratio}
\frac{f(n,j,p)}{f(n,j+1,p)} \le \frac{\binom{n}{j}}{\binom{n}{j+1}} = \frac{j+1}{n-j} < 1/2,
\end{equation}
so in particular $f(n,j,p)$ is increasing in $j$, for $j$ in that range. Let $r_+$ be defined as $r$ in~\eqref{eq:rdef} for $p=p_+$. From~\eqref{eq:ppdef} and~\eqref{eq:easyratio}, we deduce that $f(n,k,p_+) > 1/(p_+n)$ and $f(n,j,p_+) \le 1/(p_+n)$ for all $j\le k-1$, so we must have $r_+ = k$.
Also observe that $r$ in~\eqref{eq:rdef} is a non-increasing function of $p$. Combining this fact and Lemma~\ref{lem:relations}(i), we conclude that $n^{-1/3+\eps'} \le p_+\le 1-\eps'$, for some constant $\eps'=\eps'(\eps)$, since otherwise $r_+<\eps\log n$ or $r_+>n^{1/3-\eps}$ contradicting our assumptions on $k$ and the fact that $k=r_+$. Hence, in particular, $1/ (p_+ n)=o(1)$. It follows immediately from the first moment method that a.a.s.\ $\sG(n,p_+)$ has no dominating set of size $k-1$,
and then
\begin{equation}\label{eq:Q1}
\text{for all }0\le p\le p_+, \qquad
\gamma(\sG(n,p)) \ge k
\quad\text{a.a.s.}
\end{equation}
since this is a non-increasing property with respect to the addition of edges.
In fact, a.a.s.\ $\gamma(\sG(n,p_+)) = k$ but we do not prove it now, since we will need a stronger statement to hold.

Now, let $\omega=\omega_n$ be a function tending to infinity sufficiently slowly in order to meet all requirements in the argument. Define
$$
p_-  := p_+ - \frac {2\ \omega \sqrt{p_+} n}{{n \choose 2}} = p_+ \left(1 - \frac {2\ \omega n}{ \sqrt{p_+} {n \choose 2}}\right)=p_+(1+o(1)),
$$
where the last step follows from the fact that $p_+ \ge n^{-1/3+\eps'}$. Since $p_- \sim p_+$, then $p_- \ge n^{-1/3+\eps'/2}$ and $p_-$ is bounded away from 1. Clearly, 
\begin{equation}\label{eq:fpmb}
f(n,k-1,p_-) \le f(n,k-1,p_+) = \frac {1}{p_+ n} < \frac {1}{p_- n}.
\end{equation}
Let $r_-$ be defined as $r$ in~\eqref{eq:rdef} for $p=p_-$. Next we want to show that $r_-=k$ and then that $f(n,k,p_-) \ge \exp(\omega \log n )$. First, using Lemma~\ref{lem:relations}(ii) and the fact that $k=r_+=\Theta(\log n/p_+)$, we get 
\begin{align*}
1-\left(1-p_-\right)^{k-1} &= 1-\Big(1-p_{+} +\Theta \left( \frac {\omega \sqrt{p_+}}{n} \right) \Big)^{k-1} \\
&= 1- (1-p_{+})^{k-1} \left(1+\Theta\left(\tfrac{\omega \sqrt{p_+}}{n}\right)\right)^{k-1} \\
&= 1- (1-p_{+})^{k-1} \left(1+\Theta\left( \tfrac{\omega \log n }{n \sqrt{p_+}}\right)\right) \\
&= 1- (1-p_{+})^{k-1} - \Theta\left( \tfrac{\omega \log^3 n }{n^2 p_+^{3/2} }\right) \\
&= \left(1-   (1-p_{+})^{k-1}\right)  \left(1 - \Theta\left( \tfrac{\omega \log^3 n }{n^2 p_+^{3/2} }\right) \right).
\end{align*}
Hence, 
\begin{align*}
f(n,k-1,p_-) &= f(n,k-1,p_+) \left(1 - \Theta\left( \tfrac{\omega \log^3 n }{n^2 p_+^{3/2} }\right) \right)^{n-k+1} \\
&= f(n,k-1,p_+) \left(1 - \Theta\left( \tfrac{\omega \log^3 n }{n p_+^{3/2} }\right) \right)\\
&\sim f(n,k-1,p_+) = (p_+ n)^{-1} \sim (p_- n)^{-1},
\end{align*}
as $p_+ \ge n^{-1/3+\eps'}$. Combining this with~\eqref{eq:easyratio} and~\eqref{eq:fpmb}, we obtain that $f(n,k,p_-) > 1/(p_-n)$ and $f(n,j,p_-) \le 1/(p_-n)$ for all $j<k$, so $k=r_-$. Now, using Lemma~\ref{lem:relations} again (this time part (iii)), we get
$$
f(n,k,p_-) = f(n,k-1,p_-) \exp(\Theta(\log^2 n)) \sim (p_- n)^{-1} \exp(\Theta(\log^2 n)) \geq \exp(\omega \log n),
$$
as desired. The same argument holds clearly with $n-1$ playing the role of $n$. Therefore, it follows from Theorem~\ref{thm:main_general} that a.a.s.\ $\gamma(\G(n,p_-)) = k$ and $b( \sG(n,p_-) ) = \Theta(\Delta( \sG(n,p_-) )) \ge c p_- n$, for some constant $c=c(\eps)>0$. Let $Q$ be the graph property that we cannot destroy all dominating sets of size $k$ by removing any set of at most $cp_-n$ edges. Clearly, this is a non-decreasing property with respect to adding edges in the graph, so
\begin{equation}\label{eq:Q2}
\text{for all }p_-\le p\le 1, \qquad
\sG(n,p)\text{ satisfies property $Q$ a.a.s.}
\end{equation}

Finally, define
\[
\hat{m} = {n \choose 2} \frac {p_- + p_+}{2} = {n \choose 2} p_+ - \omega n \sqrt{p_+} \sim {n \choose 2} p_+,
\]
where at the last step we use the fact that $p_+>n^{-1/3+\eps}$. Easy manipulations yield
\begin{equation}\label{eq:pplus}
p_+ = \frac{\hat{m} + \omega n \sqrt{p_+}}{\binom{n}{2}}
= \frac{\hat{m} + (\sqrt2+o(1))\omega \sqrt{ \hat m }}{\binom{n}{2}}
\ge \frac{\hat{m} + \omega \sqrt{ \hat m \left(\binom{n}{2}-\hat m \right)/\binom{n}{2}  }}{\binom{n}{2}},
\end{equation}
and similarly
\begin{equation}\label{eq:pminus}
p_- = \frac{\hat{m} - \omega n \sqrt{p_+}}{\binom{n}{2}}
\le \frac{\hat{m} - \omega \sqrt{ \hat m \left(\binom{n}{2}-\hat m \right)/\binom{n}{2}  }}{\binom{n}{2}}.
\end{equation}
In view of~\eqref{eq:Q1}, \eqref{eq:Q2}, \eqref{eq:pplus} and~\eqref{eq:pminus}, we can apply Proposition~1.13 in~\cite{JLR} separately to both the property $Q$ and the property that $\gamma(\G(n,p)) \ge k$, and we conclude that a.a.s.\ $\gamma(\G(n,\hat m)) \ge k$ and $\G(n,\hat m)$ satisfies property $Q$. These two events together imply that $\gamma(\G(n,\hat{m})) = k$ and $b( \G(n,\hat{m}) ) = \Theta(p_-n) = \Theta(m/n)$. The proof is finished.
\end{proof}

\bigskip

Now, we are going to show that Theorem~\ref{thm:main_gnp} can be obtained from Theorem~\ref{thm:main_general}.
\begin{proof}[Proof of Theorem~\ref{thm:main_gnp}]
Let $p=p_n$ be such that $n^{-1/3+\eps}  \le p \le 1-\eps$ for some $\eps > 0$, and let $r=r_n$ be defined as in~\eqref{eq:rdef}. Moreover, suppose there exists a non-increasing non-negative sequence $h=h_n$ such that $p_{n+1}/p_n=1-\Theta(h_n/n)$. Our goal is to show that there exists a positive sequence $\omega=\omega_n\to\infty$ and a dense set $I'\subseteq\nat$ such that
\[
f(n,r,p) \geq \exp(\omega \log n),
\qquad
\text{for $n\in I'$}.
\]
The result will follow immediately from Theorem~\ref{thm:main_general}, and will hold for $I$ defined as in~\eqref{eq:Iprime}. (Note that, since $I'$ is dense, it is straightforward to verify that $I$ must be dense too.)

Throughout the proof, we set $\omega=\omega_n=\log\log n$. Note $h_1 = O(1)$ and so our assumptions on $p$ and $h$ imply that $h_n = O(1)$ and so there exists a universal constant $0 < A_1 < 1$ such that, for every $n\le n'\le 3n$,
\begin{equation}\label{eq:ratiop}
A_1\le \frac{p_{n'}}{p_n} \le 1.
\end{equation}
Given any fixed $j\in\{0,1,2\}$, in view of our assumptions on $p$ and $h$ and by Lemma~\ref{lem:relations}(ii), we have
\begin{align*}
1-\left(1-p_n\right)^{r_{n+1}-j} &= 1-\Big(1-p_{n+1}\big(1+\Theta(h_n/n)\big)\Big)^{r_{n+1}-j}
\\
&= 1-   (1-p_{n+1})^{r_{n+1}-j} \left(1-\Theta\left(\tfrac{p_{n+1} h_n}{n}\right)\right)^{r_{n+1}-j}
\\
&= 1-   (1-p_{n+1})^{r_{n+1}-j} \left(1-\Theta\left( \tfrac{h_n\log n}{n}\right)\right)
\\
&= \left(1-   (1-p_{n+1})^{r_{n+1}-j}\right)  \left(1 + \Theta\left( \frac{h_n\log^3 n}{ n^2p_{n+1}}\right)\right).
\end{align*}
Therefore, 
\begin{align}\label{fchange}
\frac{f(n+1,r_{n+1}-j,p_{n+1})}{f(n,r_{n+1}-j,p_n)} &=
\frac{n+1}{n+1-r_{n+1}+j}
\frac{\left(1-\left(1-p_{n+1}\right)^{r_{n+1}-j}\right)^{n-r_{n+1}+j+1}}
{\left(1-\left(1-p_n\right)^{r_{n+1}-j}\right)^{n-r_{n+1}+j}}
\nonumber \\
&=
\left(1 + \Theta\left(\frac{\log n}{np_n}\right)\right) \left(1-\left(1-p_{n+1}\right)^{r_{n+1}-j}\right)
\left(1 - \Theta\left( \frac{h_n\log^3 n}{ n^2p_{n+1}}\right)\right)^{n-r_{n+1}+j}
\nonumber \\
&=
\left(1 + \Theta\left(\frac{\log n}{np_n}\right)\right) \left(1- \Theta \left(\frac{\log^2 n}{np_{n+1}}\right)\right)
\left(1 - \Theta\left( \frac{h_n\log^3 n}{ n p_{n+1}}\right)\right)
\nonumber \\
&=  1-\Theta\left(\frac{g_n}{np_n}\right),
\end{align}
where $g_n:= \log^2 n +  h_n \log^3 n$. By our assumptions on $h_n$, we have $\log^2 n \leq g_n = O(\log^3 n)$.
In particular, for every $j\in\{0,1,2\}$ and every $n$,
\begin{equation}\label{fchange2}
\exp\left(-C_2\frac{g_n}{np_n}\right) \le \frac{f(n+1,r_{n+1}-j,p_{n+1})}{f(n,r_{n+1}-j,p_n)} \le \exp\left(-C_1\frac{g_n}{np_n}\right),
\end{equation}
for some universal constants $C_2>C_1>0$. From~\eqref{fchange} (with $j=0$) and our assumptions on $p$, we obtain
\[
\frac{(n+1)p_{n+1} f(n+1,r_{n+1},p_{n+1})}{n p_{n} f(n,r_{n+1},p_{n})} = \left(1-\Theta\left(\frac{g_n}{np_n}\right)\right) \left(1+O \left( \frac 1n \right)\right) < 1,
\]
where the last inequality holds for $n$ sufficiently large. This implies that
\[
f(n,r_{n+1},p_{n})  > \frac{(n+1)p_{n+1} f(n+1,r_{n+1},p_{n+1})}{n p_{n} }  > \frac{1}{n p_{n} },
\]
where the last inequality uses the definition of $r_{n+1}$ in~\eqref{eq:rdef}. Hence, $r_{n+1} \ge r_n$ for $n$ large enough, and thus $r$ is a nondecreasing sequence of $n$ except, possibly, for a finite number of terms.
Similarly, from~\eqref{fchange} (with $j=2$) and by Lemma~\ref{lem:relations}(iii),
\begin{eqnarray*}
f(n,r_{n+1}-2,p_{n}) &=& \left(1 +\Theta\left(\tfrac{g_n}{np_n}\right)\right) f(n+1,r_{n+1}-2,p_{n+1}) \\
&\sim& f(n+1,r_{n+1}-2,p_{n+1}) \\
&=& f(n+1,r_{n+1}-1,p_{n+1}) \exp\left(- \Theta(\log^2 n)\right) \\
&\le& \frac{1}{(n+1)p_{n+1}} \exp\left(- \Theta(\log^2 n)\right) < \frac{1}{np_n},
\end{eqnarray*}
for $n$ sufficiently large. Therefore, $r_{n+1}-2<r_n$, or equivalently $r_{n+1} \le r_n +1$, for all but finitely many $n$ (that is, $r$ can increase by at most one). We construct now the set $I'$ as follows:
$$
I' := \{n \in \nat: f(n,r_n,p_{n}) \ge \exp(\omega_n \log n) \}.
$$
Since we want to show that $I'$ contains almost all $n \in \nat$, suppose that $n_1\notin I'$ for some value $n_1\in\nat$. Then we have
\[
1/(n_1p_{n_1}) < f(n_1,r_{n_1},p_{n_1}) < \exp(\omega_{n_1}\log n_1).
\]
Our goal is to show that $n_1$ is followed by an interval of naturals $[n_1,n_2-1]\notin I'$ and then by a much longer interval $[n_2,n_3]\in I'$. We may assume that $n_1$ is sufficiently large, since the limiting density of $I'$ is not affected by ignoring any finite number of naturals.

Let
\[
n_2 = \min\{n>n_1 : r_n>r_{n_1} \text{ or } n=3n_1\}.
\]
Since $r_n=r_{n_1}$ for all $n_1 \le n \le n_2-1$, applying~\eqref{fchange2} to that range (with $j=0$) we get
\begin{eqnarray*}
f(n_2-1,r_{n_2-1},p_{n_2-1}) &\le& f(n_1,r_{n_1},p_{n_1}) \exp\left(-C_1\sum_{n=n_1}^{n_2-2}\frac{g_n}{np_n}\right) \\
&<& \exp\left( \omega_{n_1}\log n_1- \frac {C_1}{3  n_1 p_{n_1}} \sum_{n=n_1}^{n_2-2} g_n \right) \\
&<& \exp\left(  \omega_{n_1}\log n_1 + 1 - \frac {C_1}{3  n_1 p_{n_1}} \sum_{n=n_1}^{n_2-1} g_n \right).
\end{eqnarray*}
On the other hand, by the definition of $r$ (see~\eqref{eq:rdef}), we know that
\begin{equation}\label{eq:fn2minus1}
f(n_2-1,r_{n_2-1},p_{n_2-1}) > \frac {1}{p_{n_2-1}(n_2-1)} \ge \frac {1}{3  p_{n_1} n_1}.
\end{equation}
Hence, it must be the case that, say,
\begin{equation}\label{eq:n2-n1}
\sum_{n=n_1}^{n_2-1} g_n \le n_1 p_{n_1} \omega^{2}_{n_1}\log n_1.
\end{equation}
Since $g_n \ge \log^2 n$ and by our choice of $\omega_n$, it follows that 
\begin{equation}\label{eq:n2n1}
n_2-n_1 \le n_1  \omega^{2}_{n_1} / \log n_1 < n_1.
\end{equation}
As a result, $n_2 \neq 3n_1$,  and so it follows that $r_{n_2} > r_{n_1}$. In fact, since $r$ can increase by at most one, $r_{n_2} = r_{n_2-1} +1$. Now, we get from~\eqref{fchange2}, Lemma~\ref{lem:relations}(iii) and~\eqref{eq:fn2minus1} that, for some small constant $C_3>0$ (possibly depending on $\eps$),
\begin{eqnarray}
f(n_2,r_{n_2},p_{n_2}) & \ge &  C_3 f(n_2-1,r_{n_2},p_{n_2-1}) = C_3 f(n_2-1,r_{n_2-1}+1,p_{n_2-1})
\notag\\
& \ge & C_3 f(n_2-1,r_{n_2-1},p_{n_2-1}) \exp \left( C_3 \log^2 n_2 \right)
\notag\\
& \ge & \frac {C_3/3}{p_{n_1} n_1}  \exp \left( C_3 \log^2 n_2 \right)
\notag\\
& \ge & \exp \left(  (C_3/2)  \log^2   n_2  \right)   \ge \exp \left( \omega_{n_2} \log  n_2 \right).
\label{eq:fn2}
\end{eqnarray}
As a result, $n_2$ belongs to $I'$. 

Let
$$
n_3 = \min \{ n > n_2 : f(n,r_{n_2},p_n) < \exp(2\omega_{n_2}\log n_2) \text{ or } n = 3n_2\}.
$$
Note that if $f(n,r_{n_2},p_n) \ge \exp(2\omega_{n_2}\log n_2)$ for some $n_2 < n \le 3n_2$, then 
$$
f(n,r_{n_2},p_n) \ge \exp(\omega_{n}\log n) > 1/(p_n n).
$$ 
Hence, $r_n = r_{n_2}$ and, more importantly, $n \in I'$. If $n_3 = 3n_2$, then we are done, since the interval $[n_2, n_3]$ is longer than $[n_1, n_2-1]$ by at least a $\log n_1/\omega^2_{n_1}$ factor (see second step in~\eqref{eq:n2n1}). Hence, we may assume that $f(n_3,r_{n_2},p_{n_3}) < \exp(2\omega_{n_2}\log n_2)$. Applying~\eqref{fchange2} one more time and by the second last step of~\eqref{eq:fn2}, we get 
\begin{eqnarray*}
f(n_3,r_{n_2},p_{n_3}) &\ge& f(n_2,r_{n_2},p_{n_2}) \exp \left(-C_2\sum_{n=n_2}^{n_3-1}\frac{g_n}{np_n}\right) \\
&\ge& \exp\left( (C_3/2) \log^2 n_2 - \frac {C_2}{A_1 n_2 p_{n_2}} \sum_{n=n_2}^{n_3-1} g_n \right) \\
&\ge& \exp\left( (C_3/2) \log^2 n_2 - \frac {C_2}{A_1^2 n_1 p_{n_1}} \sum_{n=n_2}^{n_3-1} g_n \right),\end{eqnarray*}
which is at least $\exp(2\omega_{n_2}\log n_2)$, if, say, $\sum_{n=n_2}^{n_3-1} g_n \le n_1 p_{n_1} \log^2 n_1 / \omega_{n_1}.$ Consequently,
\begin{equation}\label{eq:n-n2}
\sum_{n=n_2}^{n_3-1} g_n > \frac {n_1 p_{n_1} \log^2 n_1} { \omega_{n_1} }.
\end{equation}
Finally, note that $h_n$ is non-increasing and $n_3-n_1 \le 8n_1$, so $g_n \sim g_{n_1}$ for any $n_1 \le n \le n_3$ and, as a result,
$$
\max\{g_n : n_2 \le n \le n_3 - 1\}< C \cdot \min\{g_n : n_1 \le n \le n_2 -1\}
$$
for some universal constant $C$. Combining this observation together with~\eqref{eq:n2-n1} and~\eqref{eq:n-n2}, it immediately follows that
$$
\frac {n_2 - n_1}{n_3-n_2+1} \le  \frac{\omega_{n_1}^{3}}{\log n_1}.
$$
Putting everything together, given any $n_1\notin I'$ sufficiently large, we obtained $n_2$ and $n_3$ such that
\[
[n_2,n_3]\subseteq I'
\qquad\text{and}\qquad
n_3-n_2 + 1\ge \frac{\log n_1}{\omega_{n_1}^{3}} (n_2-n_1).
\]
This proves that $I'$ is dense as required, and the proof of the theorem is finished. 
\end{proof}

\bigskip

Note that the lengths of the intervals $[n_1,n_2-1]$ and $[n_2,n_3]$ in the proof of Theorem~\ref{thm:main_gnp} depend on the value of $p_n$. This is not an artifact of the proof, but rather reflecting the fact that for different values of $p_n$ these lengths are indeed different: for $p_n=n^{-1/4}$, we get that $r_{2n}-r_n=\Theta(n^{1/4}\log n)$, and thus, on average, after $\Theta(\frac{np}{\log n})$ integers the value of $r$ increases by $1$. On the other hand, for $p_n=\frac{1}{\log \log n}$, we get that $r_{2n}-r_n=\Theta(\log \log n)$, and thus, on average, after $\Theta(np)$ integers the value of $r$ increases by $1$.

%
%--------------------------------------------------------
%
\section{Second moment ingredients}\label{sec:second}

For a given function $p=p_n$, let $f(n,k,p)$ and $r=r_n$ be defined as in~\eqref{eq:fdef} and~\eqref{eq:rdef}, respectively. Throughout this section, we suppose that, given our choice of $p$, there exists some infinite set $I' \subseteq\nat$ satisfying~\eqref{eq:condI} for a given function $\omega = \omega_n \to\infty$, and restrict all our asymptotic statements to $n\in I'$. For simplicity, we also write $X$ instead of $X_r$ and $\cD$ instead of $\cD_r$. For each $i\in\{0,1,\ldots,r\}$, let $W_i$ be the random variable counting the number of ordered pairs $D,D'\in\cD$ in $\sG(n,p)$ with $|D\cap D'|=i$. One of the key ingredients in our analysis is to estimate the variance of $X$ and other related random variables defined later in the paper. To do so, we will use several bounds on $\ex W_i$ that are stated in~Proposition~\ref{prop:variancekey} below. In fact, the variance of $X_{r+1}$ was already studied in~\cite{GLS} and~\cite{WG}, and we follow some of the ideas in their computations, but we need a more accurate estimation of the error terms involved. Also, the aforementioned papers deal with $X_{r+1}$ instead of $X_r$, since they make use of the fact that $\ex X_{r+1}=\exp(\Theta(\log^2n))$. In our case, this fact is replaced by our assumption~\eqref{eq:condI}.

The following lemma uses some of the computations in~\cite{GLS}, and will prepare us for~Proposition~\ref{prop:variancekey}. Given two sets of vertices $D,D'$ of size $r$ with $|D \cap D'|=i$, let $P_i$ denote the probability that $D,D'$ dominate each other in $\sG(n,p)$ (i.e.,~every vertex in $D$ has a neighbour in $D'$ and vice versa).
\begin{lem}\label{lem:large_range} 
Given a constant $\epsilon>0$, suppose that $\log^2 n / \sqrt{n} \ll p \leq 1-\eps$ and condition~\eqref{eq:condI} holds for some infinite set $I'\subseteq\nat$ and some function $\omega =\omega_n \to\infty$, where $f(n,k,p)$ and $r$ are defined as in~\eqref{eq:fdef} and~\eqref{eq:rdef}. Then, for each $0.9r\le i\le r$,
\[
\frac{r\ex W_i/P_i}{\ex W_1/P_1}  \le \exp\left(-(\omega/2 )\log n \right)  \qquad\text{(for $n\in I'$)}.
\]
\end{lem}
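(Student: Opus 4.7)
The plan is to reduce the claim to a combinatorial inequality. Observe that, for any ordered pair $(D,D')$ of $r$-subsets of $[n]$ with $|D\cap D'|=i$, the probability $\pr(D,D'\in \cD)$ depends only on $i$ by symmetry; identifying this symmetric probability with $P_i$ (any residual $i$-independent factors cancel in the ratio under consideration), summing over such pairs gives $\ex W_i/P_i = \binom{n}{r}\binom{r}{i}\binom{n-r}{r-i}$, so the $\binom{n}{r}$'s cancel and we are left with
\[
\frac{r\,\ex W_i/P_i}{\ex W_1/P_1} \;=\; \frac{\binom{r}{i}\binom{n-r}{r-i}}{\binom{n-r}{r-1}}.
\]

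To estimate this combinatorial ratio for $j:=r-i\in[0,0.1r]$, I plan to use the crude bounds $\binom{r}{i}=\binom{r}{j}\le 2^{r}$ together with the telescoping identity
\[
\frac{\binom{n-r}{r-1}}{\binom{n-r}{j}}\;=\;\prod_{k=j+1}^{r-1}\frac{n-r-k+1}{k}\;\ge\;\left(\frac{n-2r+2}{r-1}\right)^{r-1-j}\;\ge\;\left(\frac{n}{2r}\right)^{r-1-j},
\]
valid for $n\ge 4r$, which follows from $r=o(n)$ in Lemma~\ref{lem:relations}(ii). Taking logarithms and noting $r-1-j\ge 0.9r-1$ yields
\[
\log\frac{\binom{r}{j}\binom{n-r}{j}}{\binom{n-r}{r-1}}\;\le\;O(r)-(0.9r-1)\log(n/r).
\]

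To close the argument, I invoke condition~\eqref{eq:condI}: $\omega\log n \le \log f(n,r,p)\le\log\binom{n}{r}\le r\log(n/r)+r$, so $(\omega/2)\log n\le \tfrac12 r\log(n/r)+O(r)$. Combined with the previous display, it therefore suffices to check
\[
\tfrac12 r\log(n/r)+O(r)\;\le\;(0.9r-1)\log(n/r)-O(r),
\]
i.e.\ $0.4\,r\log(n/r)\ge O(r)$. Since $r=\Theta(\log n /p)$ and $r=o(n)$ by Lemma~\ref{lem:relations}(ii), we have $\log(n/r)\to\infty$, so this holds for $n$ sufficiently large.

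The main difficulty I anticipate is the tightness of the bookkeeping: from the a priori bound $\omega\le(1+o(1))\,r\log(n/r)/\log n$, the combinatorial decay rate must strictly exceed $\tfrac12 r\log(n/r)$. The threshold $i\ge 0.9r$ provides exactly the needed margin $0.9-\tfrac12=0.4$, which is just enough to absorb the lower-order $O(r)$ terms once $\log(n/r)\to\infty$; weakening either the constraint on $i$ or the lower bound on $p$ (and thus on $\log(n/r)$) would break this balance.
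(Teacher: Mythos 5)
Your opening identity is where the argument breaks: $\ex W_i/P_i$ is \emph{not} the pure count $\binom{n}{r}\binom{r}{i}\binom{n-r}{r-i}$. The event $\{D,D'\in\cD\}$ requires, besides the mutual domination encoded in $P_i$, that each of the $n-2r+i$ vertices outside $D\cup D'$ be dominated by \emph{both} sets, and the probability of that event depends on $i$ in an essential way (the two domination events become more positively correlated as the overlap grows). Concretely,
\[
\ex W_i/P_i \;=\; \binom{n}{r}\binom{r}{i}\binom{n-r}{r-i}\,\Bigl(1-2(1-p)^r+(1-p)^{2r-i}\Bigr)^{n-2r+i},
\]
so the factor you discard, measured relative to $i=1$, is $\exp\bigl(\Theta(n(1-p)^{2r-i})\bigr)$. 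Since $(1-p)^r=\Theta(\log^2 n/(pn))$, for $i$ close to $r$ this is $\exp\bigl(\Theta(\log^2 n/p)\bigr)=\exp\bigl(\Theta(r\log(pn))\bigr)$ --- the \emph{same} exponential order as the combinatorial decay $1/\binom{n-r}{r-1}$ that your bound lives off. A quick sanity check at $i=r$ shows the intermediate estimate is actually false, not merely underived: there $W_r=X$, $P_r=1$, and $\ex W_1/P_1\sim (r^2/n)(\ex X)^2$, so the true ratio is $\sim n/(r\,\ex X)$. Under~\eqref{eq:condI} this can be as large as $\exp\bigl(-(1+o(1))\omega\log n\bigr)$ with $\omega$ growing arbitrarily slowly, which vastly exceeds your claimed $\exp\bigl(-(0.9r-1)\log(n/r)+O(r)\bigr)=\exp\bigl(-\Theta(\log^2 n/p)\cdot(1+o(1))\bigr)$ decay.

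This also explains why you end up using~\eqref{eq:condI} in the wrong direction. You invoke it only to upper-bound $\omega\log n$ by $\log\binom{n}{r}$; but the whole point of the hypothesis is the \emph{lower} bound $\ex X\ge e^{\omega\log n}$, which supplies the factor $1/\ex X\le e^{-\omega\log n}$ that ultimately produces $e^{-(\omega/2)\log n}$. The correct balance (this is the route the paper takes, following Glebov--Liebenau--Szab\'o) is: write the ratio as $\frac{1+o(1)}{\ex X}\cdot\frac{n}{r}\cdot\binom{r}{i}\binom{n-r}{r-i}\cdot\bigl(1+\tfrac{(1-p)^r(1-(1-p)^{r-i})}{1-2(1-p)^r+(1-p)^{2r-i}}\bigr)^{-(n-r)}$, bound the combinatorial term by $e^{2(r-i)\log n}$, and absorb it into the exponential using $2(r-i)\log n\le\frac{n}{2}(1-p)^r\bigl(1-(1-p)^{r-i}\bigr)$; what survives is exactly $n/\ex X\le e^{-(\omega/2)\log n}$. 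Your combinatorial telescoping is fine as far as it goes, but without the correlation factor and without $1/\ex X$ the inequality cannot be closed.
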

\begin{proof}[Sketch of proof]
We follow some of the computations in Section~3.1 of~\cite{GLS}. In that paper, their choice of $r$ corresponds to our $r+1$, and their calculations assume $p=o(1)$, but everything we use here remains valid in our setting. 
First, note that
\begin{equation}\label{eq:EWi}
\ex W_i = \frac{n!}{i!(r-i)!^2(n-2r+i)!} \left(  1-(1-p)^i + (1-p)^i(1-(1-p)^{r-i})^2 \right)^{n-2r+i}P_i.
\end{equation}
Observe that $\ex W_i/P_i$ corresponds exactly to $f(i)$ in~\cite{GLS}. By adapting~(3.8) in~\cite{GLS} to our notation and using our assumption~\eqref{eq:condI}, we get
\begin{align*}
\frac{r\ex W_i/P_i}{\ex W_1/P_1} &\le \frac{(1+o(1))}{\ex X} \frac{n}{r} \binom{r}{i}\binom{n-r}{r-i}
\left(1+\frac{(1-p)^r\left(1-(1-p)^{r-i}\right)}{1-2(1-p)^r+(1-p)^{2r-i}}\right)^{-(n-r)}
\\
&\le \exp\left( - \omega \log n + \log n + 2(r-i)\log n - n
(1-p)^r\left(1-(1-p)^{r-i}\right)(1+o(1)) \right).
\end{align*}
(The term $-\omega \log n$ in the exponent above corresponds to $-(1+o(1))\log^2(pn)$ in~\cite{GLS}, because of their different choice of $r$.)
Moreover, (3.9) in~\cite{GLS} gives that
\[
2(r-i)\log n - \frac{n}{2} (1-p)^r\left(1-(1-p)^{r-i}\right)  \le 0,
\]
and therefore
\[
\frac{r\ex W_i/P_i}{\ex W_1/P_1} \le \exp\left( - \omega \log n + \log n  \right) \le \exp\left( - (\omega/2) \log n \right).\qedhere
\]
\end{proof}

\bigskip
Before we proceed, we need one more definition. Given a constant $\eps>0$ and for $i\in\{1, 2, \ldots,r\}$, let
\begin{equation}\label{eq:Qidef}
Q_i = \sum_{j=0}^{\min\{i-1,L-1\}} \pr(\Bin(i-1,p)=j) (\pr(\Bin(r-i,p)<L-j))^2,
\end{equation}
where $L = \lfloor \varrho p r \rfloor$ with $\varrho=\varepsilon^2$.  The following proposition will be central for estimating the variance of several random variables.
\begin{prop}\label{prop:variancekey}
Given a constant $\varepsilon > 0$, assume that $\log^2 n / \sqrt{n} \ll p \leq 1-\varepsilon$ and condition~\eqref{eq:condI} is satisfied for some infinite set $I'\subseteq\nat$. Then, the following holds for $\sG(n,p)$ with $n$ restricted to $I'$:
\begin{itemize}
\item[(i)]
\[
\frac{\ex W_0}{(\ex X)^2} \le 1 + \Theta\left(\frac{\log^3n}{p^2n}\right)
\qquad\text{and}\qquad
\frac{\ex W_1}{(\ex X)^2} \le \frac{r^2}{n} \left( 1 + \Theta\left(\frac{\log^4n}{pn} + \frac{\log^3n}{p^2n}\right) \right);
\]
\item[(ii)] 
$$
 \sum_{i=1}^r i\ex W_i \le \frac{r^2}{n} \left( 1 + \Theta\left(\frac{\log^4n}{pn} + \frac{\log^3n}{p^2n}\right) \right) (\ex X)^2;
$$
%\[
%\sum_{i=1}^r \ex W_i \le \frac{r^2}{n} \left( 1 + \Theta\left(\frac{\log^4n}{pn} + \frac{\log^3n}{p^2n}\right) \right) (\ex X)^2,
%\]
%\item[(iii)]
%\[
%\sum_{i=1}^r i\ex W_i \le \frac{r^2}{n} \left( 1 + \Theta\left(\frac{\log^4n}{pn} + \frac{\log^3n}{p^2n}\right) \right) (\ex X)^2,
%\]
\item[(iii)]
\[
\sum_{i=1}^r i Q_i \ex W_i \le Q_1\frac{r^2}{n}\left( 1 + \Theta\left(\frac{\log^4n}{pn} + \frac{\log^3n}{p^2n}\right) \right) (\ex X)^2.
\]
\end{itemize}
\end{prop}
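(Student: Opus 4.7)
The plan is to start from~\eqref{eq:EWi}, rewrite the bracket via the identity $1-(1-p)^i+(1-p)^i(1-(1-p)^{r-i})^2 = 1-2(1-p)^r+(1-p)^{2r-i}$, and set $q=(1-p)^r$, obtaining
\[
\frac{\ex W_i}{(\ex X)^2} = \frac{\binom{r}{i}\binom{n-r}{r-i}}{\binom{n}{r}} \cdot \frac{\bigl(1-2q+(1-p)^{2r-i}\bigr)^{n-2r+i}}{(1-q)^{2(n-r)}} \cdot P_i.
\]
Throughout I use: $r=\Theta(\log n/p)$ and $q=\Theta(\log^2n/(pn))$ from Lemma~\ref{lem:relations}(ii); the upper bound $P_i\le(1-q)^{r-i}$ coming from independence of the events $\{w\text{ has a neighbour in }D\}$ over distinct $w\in D'\setminus D$; the FKG lower bound $P_i\ge(1-q)^{2(r-i)}$; and, for $0.9r\le i\le r$, Lemma~\ref{lem:large_range}.

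For part~(i), I would expand each factor directly at $i=0$ and $i=1$. The binomial factor equals $\exp(-r^2/n+O(r^3/n^2))$ at $i=0$ and $(r^2/n)(1+O(r^2/n))$ at $i=1$. The outside factor at $i=0$ is exactly $(1-q)^{-2r}=\exp(2rq+O(rq^2))$; at $i=1$, writing $1-2q+(1-p)^{2r-1}=(1-q)^2(1+O(pq^2))$ and raising to the $(n-2r+1)$-th power yields $(1-q)^{-2(r-1)}\exp(O(\log^4n/(pn)))$. Multiplying by $P_i\le(1-q)^{r-i}$ and noting that $rq=\Theta(\log^3n/(p^2n))$ dominates $r^2/n=\Theta(\log^2n/(p^2n))$ by a factor $\log n$ delivers the claimed bounds.

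For parts~(ii) and~(iii) I would split the sums at $i=0.9r$. In the tail $0.9r\le i\le r$, Lemma~\ref{lem:large_range} together with $P_1=1-o(1)$ gives $\ex W_i\le n^{-\omega/3}\ex W_1$ uniformly, so this range contributes $o((r^2/n)(\ex X)^2)$ to $\sum i\ex W_i$; for~(iii) the superpolynomial factor $\exp(-(\omega/2)\log n)$ also absorbs $Q_i/Q_1$, since $1/Q_1$ is only a fixed polynomial in $n$ by Chernoff. In the range $1\le i<0.9r$ I would prove the recursion
\[
\frac{\ex W_{i+1}}{\ex W_i}\le \frac{r^2}{(i+1)n}(1+o(1)),
\]
with the binomial ratio giving $(r-i)^2/((i+1)(n-2r+i+1))\sim r^2/((i+1)n)$, the outside-factor ratio at most $\exp(O(np(1-p)^{2r-i-1}))=\exp(O(\log^2n\cdot q^{0.1}))=1+o(1)$ uniformly, and $P_{i+1}/P_i\le(1-q)^{-(r-i)-1}=1+o(1)$ from the two bounds on $P_i$. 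Iterating yields $\ex W_i\le \ex W_0(r^2/n)^i/i!\cdot(1+o(1))$, so $\sum_{i=1}^{0.9r}i\ex W_i\le (r^2/n)\ex W_0 e^{r^2/n}(1+o(1))$, which combined with~(i) gives~(ii).

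Part~(iii) follows the same template after bounding $Q_i/Q_1$ in the small-$i$ range. From the definition $Q_i\le\pr(\Bin(i-1,p)<L)\pr(\Bin(r-i,p)<L)^2$, and since $L=\lfloor\varrho pr\rfloor$ with $\varrho=\eps^2$ lies well below the mean of both binomials, a Chernoff comparison controls the ratio $\pr(\Bin(r-i,p)<L)/\pr(\Bin(r-1,p)<L)$ so that any growth of $Q_i/Q_1$ is dominated by the factorial decay $(r^2/n)^i/i!$ of $\ex W_i/\ex W_0$. The main obstacle is precisely this last point: $Q_i$ is not monotone in $i$, and in the middle range $2\le i<0.9r$ one must verify that its potential growth is dominated by the geometric decay of $\ex W_i$, which may require splitting the small-$i$ range further (for example at $i=\varrho r$) according to whether $\pr(\Bin(i-1,p)<L)$ is nearly $1$ or already exponentially small.
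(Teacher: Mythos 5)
Your treatment of parts (i) and (ii) follows essentially the same route as the paper: the explicit formula~\eqref{eq:EWi}, the identity $1-(1-p)^i+(1-p)^i(1-(1-p)^{r-i})^2=1-2(1-p)^r+(1-p)^{2r-i}$, a consecutive-ratio recursion for $i$ below $0.9r$, and Lemma~\ref{lem:large_range} for the tail. The one structural difference is that you iterate on $\ex W_i$ itself, which forces you to control $P_{i+1}/P_i$ from both sides; the paper iterates on $\ex W_i/P_i$ and only invokes $P_i\le 1$ once at the very end, so it never needs a lower bound on $P_i$. Your sandwich does work, but note two small inaccuracies: the Harris/FKG lower bound is not $P_i\ge(1-(1-p)^r)^{2(r-i)}$ as written, because the vertices of $D\cap D'$ must also be dominated by both sets, contributing extra factors $(1-(1-p)^{r-1})^{2i}$ (this only costs $1-O(r(1-p)^r)=1-o(1)$, so nothing breaks, but the bound needs the correction); and ``iterating yields $\ex W_i\le\ex W_0(r^2/n)^i/i!\cdot(1+o(1))$'' should carry a $(1+o(1))^{i}$ factor, which is harmless only because the subsequent sum is dominated by its first term.

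Part (iii) contains a genuine gap, which you flag yourself. The global bound $Q_i/Q_1\le 1/Q_1=(\pr(\Bin(r-1,p)<L))^{-2}=\exp(\Theta(pr))=n^{\Theta(1)}$ is \emph{not} absorbed by the factorial decay when $i$ is bounded: for $p=\Theta(1)$ and, say, $i=2$, the decay factor $\ex W_2/\ex W_1$ is only of order $r^2/n=\Theta(\log^2 n/n)$, while $1/Q_1$ can be $n^{C}$ with $C$ large (it grows as $\eps\to0$), so the product can exceed the target by a polynomial factor. The paper closes this by bounding the per-step ratio $(i+1)Q_{i+1}/(iQ_i)$ directly from the definition~\eqref{eq:Qidef}, via a term-by-term comparison of the defining triple sums, obtaining $O(1)$ for $C\log n\le i\le r-3\log\log n/p$ and $O(\log n)$ for smaller $i$; combined with the per-step decay $O(\log^2 n/(p^2n))$ of $\ex W_{i+1}/P_{i+1}$ relative to $\ex W_i/P_i$, every step ratio of $iQ_i\ex W_i/P_i$ is $O(\log^3 n/(p^2n))=o(1)$ and the sum is again dominated by the $i=1$ term. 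If you prefer to keep your global comparison, a short fix is the identity $\pr(\Bin(m,p)<L)=(1-p)\pr(\Bin(m-1,p)<L)+p\pr(\Bin(m-1,p)<L-1)$, which gives $\pr(\Bin(r-i,p)<L)\le(1-p)^{-(i-1)}\pr(\Bin(r-1,p)<L)$ and hence $Q_i/Q_1\le(1-p)^{-2(i-1)}\le\eps^{-2(i-1)}$; this geometric growth with bounded ratio is then genuinely absorbed by the factorial decay of $\ex W_i$. One such argument must be supplied before part (iii) is proved; the tail range $0.9r\le i\le r$, where you use $Q_i\le1$ and $1/Q_1=n^{O(1)}$ against $\exp(-(\omega/2)\log n)$, is fine as you have it.
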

\begin{proof}
Denoting by $P_i$ is, as above, the probability that $D,D'$ with intersection of size $i$ dominate each other, note first that from~\eqref{eq:EWi} we obtain in particular,
\begin{equation}\label{eq:EW0}
\ex W_0 = \frac{n!}{r!^2(n-2r)!} \left(1-(1-p)^{r}\right)^{2(n-2r)}P_0,
\end{equation}
and
\begin{equation}\label{eq:EW1}
\ex W_1 = \frac{n!}{(r-1)!^2(n-2r+1)!} \left(  p + (1-p)(1-(1-p)^{r-1})^2 \right)^{n-2r+1}P_1.
\end{equation}
Also, recall that
\begin{equation}\label{eq:EX2}
(\ex X)^2 = f(n,r,p)^2 = \left(\frac{n!}{r!(n-r)!} \left(1-(1-p)^{r}\right)^{n-r}\right)^2.
\end{equation}

Using~\eqref{eq:EW0}, \eqref{eq:EX2} and Lemma~\ref{lem:relations}(ii), we can easily bound the ratio
\[
\frac{\ex W_0}{(\ex X)^2} = \frac{[n-r]_r}{[n]_r} \big(1-(1-p)^{r}\big)^{-2r}P_0
\le
 \big(1-(1-p)^{r}\big)^{-2r}
= 1 + \Theta(r(1-p)^{r})
= 1 + \Theta\left(\frac{\log^3n}{p^2n}\right).
\]
Moreover, from~\eqref{eq:EW1}, \eqref{eq:EX2},
Lemma~\ref{lem:relations}(ii) and the fact that $p \gg \log^2n/\sqrt n$,
\begin{align*}
\frac{\ex W_1}{(\ex X)^2} &= \frac{r^2[n-r]_{r-1}}{[n]_r} \frac{\left(  p + (1-p)(1-(1-p)^{r-1})^2 \right)^{n-2r+1}} { \left( \left(1-(1-p)^{r}\right)^{n-r}\right)^2} P_1
\\
&= \frac{r^2[n-r]_{r-1}}{[n]_r} \frac{\left( 1 - 2(1-p)^{r} + (1-p)^{2r-1} \right)^{n-2r+1}} { \left( \left(1-(1-p)^{r}\right)^{n-r}\right)^2} P_1
\\
&= \frac{r^2[n-r]_{r-1}}{[n]_r} \left( 1 + \frac{ (1-p)^{2r-1} - (1-p)^{2r} )} {\left(1-(1-p)^{r}\right)^2}\right)^{n-r} \left( 1 - 2(1-p)^{r} + (1-p)^{2r-1} \right)^{-r+1} P_1
\\
&= \frac{r^2[n-r]_{r-1}}{n[n-1]_{r-1}} \left( 1 + \frac{ p(1-p)^{2r-1}} {\left(1-(1-p)^{r}\right)^2}\right)^{n-r} \left( 1 - 2(1-p)^{r} + (1-p)^{2r-1} \right)^{-r+1} P_1
\\
&= \frac{r^2}{n} \left(1 -  \Theta\left(\frac{r^2}{n}\right)\right) \left(1 +  \Theta\left(pn(1-p)^{2r}\right)\right) \left( 1 + \Theta\left(r(1-p)^{r}\right) \right) P_1
\\
&= \frac{r^2}{n} \left(1 -  \Theta\left(\frac{\log^2n}{p^2n}\right)\right) \left(1 + \Theta\left(\frac{\log^4n}{pn}\right)\right) \left( 1 + \Theta\left(\frac{\log^3n}{p^2n}\right) \right) P_1
\\
&\le \frac{r^2}{n} \left( 1 + \Theta\left(\frac{\log^4n}{pn} + \frac{\log^3n}{p^2n}\right) \right).
\end{align*}
This proves part~(i). Note that, in fact, we get something slightly stronger, namely
\begin{equation}\label{eq:stronger}
\frac{\ex W_1/P_1}{(\ex X)^2} \le \frac{r^2}{n} \left( 1 + \Theta\left(\frac{\log^4n}{pn} + \frac{\log^3n}{p^2n}\right) \right).
\end{equation}
For $i$ not too close to $r$, say $1\le i\le r-3\log\log n/p$, we have
\begin{align}
\frac{\ex W_{i+1}/P_{i+1}}{\ex W_i/P_i} &= \frac{(r-i)^2}{(i+1)(n-2r+i+1)} \frac{\left(  1-(1-p)^{i+1} + (1-p)^{i+1}(1-(1-p)^{r-i-1})^2 \right)^{n-2r+i+1}}{\left(  1-(1-p)^i + (1-p)^i(1-(1-p)^{r-i})^2 \right)^{n-2r+i}}
\notag\\
&= \frac{(r-i)^2}{(i+1)(n-2r+i+1)} \left(\frac{ 1 - 2(1-p)^r+(1-p)^{2r-i-1} }{ 1 - 2(1-p)^{r} + (1-p)^{2r-i} }\right)^{n-2r+i} \left(  1 - 2(1-p)^r+(1-p)^{2r-i-1} \right)
\notag\\
&= \frac{(r-i)^2}{(i+1)(n-2r+i+1)} \left(1+\frac{p(1-p)^{2r-i-1}}{ 1 - 2(1-p)^{r} + (1-p)^{2r-i} }\right)^{n-2r+i} \left(  1 - 2(1-p)^r+(1-p)^{2r-i-1} \right)
\notag\\
&\le \frac{r^2}{n-2r} \left(1+O\left(pn(1-p)^{r+3\log\log n/p}\right)\right) \left(  1 - \Theta\left((1-p)^r\right) \right)
\notag\\
&\le \frac{r^2}{n-2r} \left(1+O\left(e^{-3\log\log n}\log^{2}n\right)\right) \left(  1 - \Theta\left(\frac{\log^2n}{pn}\right) \right)
\notag\\
&= \frac{r^2}{n}(1+o(1)) = O(\log^2 n/(p^2 n)).
\label{eq:ratioEWi}
\end{align}
On the other hand, consider now  $r-3\log\log n/p \le i \le r$. Since this range is eventually included in the range $0.9r \le i \le r$ then, by Lemma~\ref{lem:large_range},
\begin{equation}\label{eq:germans_eq}
\frac{r\ex W_i/P_i}{\ex W_1/P_1} \le \exp\left(- (\omega/2) \log n \right). 
\end{equation}    
Now, note that for $i \geq 1$ we have
$$
\frac{(i+1)\ex W_{i+1}/P_{i+1}}{i \ex W_i/P_i} \leq 2\frac{\ex W_{i+1}/P_{i+1}}{\ex W_i/P_i}.
$$ 
Combining this with~\eqref{eq:stronger}, \eqref{eq:ratioEWi} and~\eqref{eq:germans_eq},
\begin{align*}
\sum_{i=1}^r i \ex W_i &\leq \sum_{i=1}^r i\ex W_i/P_i =  \frac{ \ex W_1}{P_1} \left(1+O\left(\frac{\log^2 n}{p^2 n}\right)\right) + \frac{\ex W_1}{P_1} O\left(\log \log n/p\right)\exp\left(  - (\omega/2) \log n \right) \\
&= \frac{\ex W_1}{P_1} \left(1+O\left( \frac{\log^2 n}{p^2n} \right)\right) \\
&\leq (\ex X)^2\frac{r^2}{n} \left( 1 + \Theta\left(\frac{\log^4n}{pn} + \frac{\log^3n}{p^2n}\right)\right)\left(1+O\left( \frac{\log^2 n}{p^2n}\right)\right)  \\
 &= (\ex X)^2\frac{r^2}{n} \left( 1 + \Theta\left(\frac{\log^4n}{pn} + \frac{\log^3n}{p^2n}\right)\right),
\end{align*}
and part (ii) follows. 

For part (iii), observe first that there exists some $C =C(\eps) > 0$ such that if $i \ge C \log n$, then $i > 2L$.  Hence, for $C \log n \leq i \leq r-3\log\log n/p$, substituting $B_{a,b}$ for $\pr(\Bin(a,p)=b)$, we get
\begin{eqnarray*}
\frac{(i+1)Q_{i+1}}{iQ_i} &=& \frac{(i+1)\sum_{j=0}^{L-1} \pr(\Bin(i,p)=j) (\pr(\Bin(r-i-1,p)<L-j))^2}{i\sum_{j=0}^{L-1} \pr(\Bin(i-1,p)=j) (\pr(\Bin(r-i,p)<L-j))^2} \\
&=& \frac{(i+1) \sum_{j=0}^{L-1}\sum_{k=0}^{L-j-1}\sum_{k'=0}^{L-j-1} \frac{i}{i-j} \frac{r-i-k}{r-i} \frac{r-i-k'}{r-i} \frac{1}{1-p} B_{i-1,j} B_{r-i,k} B_{r-i,k'}} {i\sum_{j=0}^{L-1} \sum_{k=0}^{L-j-1}\sum_{k'=0}^{L-j-1} B_{i-1,j} B_{r-i,k} B_{r-i,k'} } \\
&\le& \frac {i+1}{i} \frac {i}{i-L+1} \frac {1}{1-p} = O(1).
\end{eqnarray*}
Similarly, for $L \le i < C \log n$ we have 
$$
 \frac{(i+1)Q_{i+1}}{iQ_i}= O(\log n).
$$
On the other hand, for $1 \leq i \leq L-1$, 
\begin{eqnarray*}
   \frac{(i+1)Q_{i+1}}{iQ_i} &=& \frac{(i+1) \sum_{j=0}^{i-1}\sum_{k=0}^{L-j-1}\sum_{k'=0}^{L-j-1} \frac{i}{i-j}\frac{r-i-k}{r-i} \frac{r-i-k'}{r-i} \frac{1}{1-p}B_{i-1,j} B_{r-i,k} B_{r-i,k'}} {i\sum_{j=0}^{i-1} \sum_{k=0}^{L-j-1}\sum_{k'=0}^{L-j-1} B_{i-1,j} B_{r-i,k} B_{r-i,k'} } \\
&&   +\frac{(i+1)p^i \sum_{k=0}^{L-i-1}\sum_{k'=0}^{L-i-1} B_{r-i-1,k} B_{r-i-1,k'} } {i\sum_{j=0}^{i-1} \sum_{k=0}^{L-j-1}\sum_{k'=0}^{L-j-1} B_{i-1,j}  B_{r-i,k} B_{r-i,k'} }\\
&=& O(\log n)+\frac{(i+1)p^i (\frac{1}{1-p})^2\sum_{k=0}^{L-i-1}\sum_{k'=0}^{L-i-1} B_{r-i,k} B_{r-i,k'} } {ip^{i-1} \sum_{k=0}^{L-i}\sum_{k'=0}^{L-i} B_{r-i,k} B_{r-i,k'} } \\
&=& O(\log n).
\end{eqnarray*}
Finally, for $r-3\log\log n/p \leq i \leq r$, by Lemma~\ref{lem:large_range}, since $Q_i \leq 1$, and by Chernoff's bound (see~\eqref{eq:chernoffstrong}), 
\begin{eqnarray*}
\frac{r Q_i \ex W_i/P_i}{Q_1 \ex W_1/P_1} &\le & \frac{ Q_i}{Q_1} \exp\left(- (\omega/2)\log n \right) \\
& \leq &\frac{1}{(\pr(\Bin(r-1,p) < L))^2}\exp\left(- (\omega/2)\log n \right) \\
& \leq & \exp\left(((1-\varrho)^2pr/2)-  (\omega/2)\log n \right) \\
& \leq & \exp\left(O(\log n)-  (\omega/2)\log n \right) \le \exp\left(- (\omega/3)\log n \right),
\end{eqnarray*}
where the last inequality follows from $p \gg  \log^2 n/\sqrt{n}$. Combining all bounds, 
\begin{eqnarray*}
\sum_{i=1}^r i Q_i \ex W_i &\leq& \sum_{i=1}^r i Q_i \ex W_i/P_i =  \frac{ \ex W_1 Q_1}{P_1} \left(1+O\left(\frac{\log^3 n}{p^2 n}\right)\right) \\
 &=& Q_1(\ex X)^2\frac{r^2}{n} \left( 1 + O\left(\frac{\log^3n}{p^2n}\right)+O\left(\frac{\log^4n}{pn}\right)\right),
 \end{eqnarray*}
 and (iii) follows. The proof of the proposition is finished.
\end{proof}
As an immediate consequence of this proposition, we can bound the variance of $X=X_r$ (which is also done for $X_{r+1}$ in~\cite{GLS} and~\cite{WG}), and obtain the following result.
\begin{cor}\label{cor:small_variance}
Given a constant $\eps>0$, assume that $\log^2n/\sqrt n \ll p \le 1-\eps$ and condition~\eqref{eq:condI} is satisfied for some infinite set $I'\subseteq\nat$. Then a.a.s.\ $X \sim f(n,r,p)$ (for $n\in I'$). Consequently, a.a.s.\ $\gamma(\sG(n,p))=r$ (for $n\in I'$).
\end{cor}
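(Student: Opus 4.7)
The plan is to derive the corollary by applying Chebyshev's inequality to $X=X_r$, using the second moment bounds already established in Proposition~\ref{prop:variancekey}, and then combining the resulting lower bound $X\ge1$ a.a.s.\ with a trivial first moment bound on $X_{r-1}$ to pin down $\gamma(\sG(n,p))$.

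First, I would expand $\ex X^2=\sum_{i=0}^{r}\ex W_i$, split off the $i=0$ term, and observe that
\[
\sum_{i=1}^{r}\ex W_i \;\le\; \sum_{i=1}^{r} i\,\ex W_i,
\]
so that Proposition~\ref{prop:variancekey}(i) controls $\ex W_0$ while Proposition~\ref{prop:variancekey}(ii) controls the remaining sum. Combining the two bounds yields
\[
\frac{\ex X^2}{(\ex X)^2} \;\le\; 1+\Theta\!\left(\frac{\log^3 n}{p^2 n}\right) + \frac{r^2}{n}\left(1+o(1)\right).
\]
By Lemma~\ref{lem:relations}(ii), $r^2/n=\Theta(\log^2 n/(p^2 n))$, and the assumption $p\gg \log^2 n/\sqrt n$ gives $p^2 n\gg \log^4 n$, so both error terms are $o(1)$. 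Hence $\var(X)/(\ex X)^2 = o(1)$, and Chebyshev's inequality gives $X=(1+o(1))\ex X=(1+o(1))f(n,r,p)$ a.a.s.\ for $n\in I'$, which is the first claim.

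For the second claim, the upper bound $\gamma(\sG(n,p))\le r$ follows because condition~\eqref{eq:condI} forces $\ex X\ge \exp(\omega\log n)\to\infty$, so $X\sim \ex X$ implies $X\ge 1$ a.a.s., i.e.\ a dominating set of size $r$ exists. For the matching lower bound, I would invoke the definition of $r$ in~\eqref{eq:rdef}, which gives $\ex X_{r-1}=f(n,r-1,p)\le 1/(pn)=o(1)$; by Markov's inequality, $X_{r-1}=0$ a.a.s., and since enlarging a dominating set keeps it dominating, the absence of a dominating set of size exactly $r-1$ rules out any dominating set of size at most $r-1$ as well. Together this gives $\gamma(\sG(n,p))=r$ a.a.s.\ for $n\in I'$.

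There is no real obstacle here; the work has been absorbed into Proposition~\ref{prop:variancekey}. The only point that requires a moment of care is verifying that both the $\log^3 n/(p^2 n)$ error and the main $r^2/n$ contribution are $o(1)$ under the milder hypothesis $p\gg \log^2 n/\sqrt n$ (rather than the stronger $p\ge n^{-1/3+\eps}$ used elsewhere), which follows directly from the calculation above.
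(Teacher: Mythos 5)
Your proposal is correct and follows essentially the same route as the paper: bound $\ex(X^2)=\sum_{i=0}^r\ex W_i$ using Proposition~\ref{prop:variancekey}(i) for the $i=0$ term and (ii) for the rest, conclude $\var X=o((\ex X)^2)$ via $r^2/n=o(1)$, apply Chebyshev, and finish with the first moment bound $\ex X_{r-1}=f(n,r-1,p)\le 1/(pn)=o(1)$ from the definition of $r$. The only difference is cosmetic: you spell out the verification that the error terms are $o(1)$ under the milder hypothesis $p\gg\log^2 n/\sqrt n$ and the monotonicity argument ruling out dominating sets of size below $r-1$, both of which the paper leaves implicit.
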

\begin{proof}
From Proposition~\ref{prop:variancekey}~(i) and~(ii), we get
\[
\ex W_0 \le (1+o(1)) (\ex X)^2
\qquad\text{and}\qquad
\sum_{i=1}^r \ex W_i \le \sum_{i=1}^r i\ex W_i \le O(r^2/n) (\ex X)^2 = o (\ex X)^2,
\]
where we used that $r^2/n=o(1)$ by Lemma~\ref{lem:relations}(ii). Therefore,
\[
\var X = \ex(X^2) - (\ex X)^2 = \sum_{i=0}^r \ex W_i - (\ex X)^2 = o(\ex X)^2,
\]
and thus, by Chebyshev's inequality, we conclude that $X \sim \ex X = f(n,r,p)\to\infty$ a.a.s.\ for $n \in I'$. The second claim in the statement follows immediately from the fact that $\ex X_{r-1}=f(n,r-1,p)=o(1)$ (from the definition of $r$ in~\eqref{eq:rdef}).
\end{proof}

\bigskip

Before we state the next lemma, we need one more definition. For a given vertex $v$, let $Z_v$ be the random variable counting the number of dominating sets of size $r$ containing vertex $v$. We will use Proposition~\ref{prop:variancekey} to prove the following observation. 

\begin{lem}\label{lem:vertex}
Given a constant $\varepsilon > 0$,  assume that $\log^2 n / \sqrt{n} \ll p \leq 1-\varepsilon$ and condition~\eqref{eq:condI} is satisfied for some infinite set $I'\subseteq\nat$. Then, the following holds for $\sG(n,p)$ and any vertex $v \in [n]$:
$$
\ex Z_v = \frac{r}{n} \ \ex X \qquad \text{and} \quad \text{(for $n\in I'$)} \qquad \var Z_v = \Theta\left(\frac{\log^4n}{pn} + \frac{\log^3n}{p^2n}\right) (\ex Z_v)^2.
$$
\end{lem}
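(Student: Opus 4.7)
The plan is as follows. The first identity is a one-line double-counting/symmetry argument: each dominating $r$-set is counted in exactly $r$ of the variables $Z_v$, so $\sum_{v\in[n]} Z_v = r\,X$, and vertex-transitivity of $\sG(n,p)$ forces all $\ex Z_v$ to coincide. Taking expectations and dividing by $n$ gives $\ex Z_v = (r/n)\,\ex X$.

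For the variance, I would expand $Z_v^2$ as an indicator sum over ordered pairs $(D,D')$ of dominating $r$-sets both containing $v$, and stratify by the intersection size $i = |D\cap D'|$. Writing $W_i^{(v)}$ for that refined count, the double-counting identity $\sum_{v} W_i^{(v)} = i\,W_i$ together with vertex-transitivity gives $\ex W_i^{(v)} = (i/n)\,\ex W_i$, and summing,
\[
\ex Z_v^2 \;=\; \sum_{i=1}^r \ex W_i^{(v)} \;=\; \frac{1}{n}\sum_{i=1}^r i\,\ex W_i.
\]
I would then feed this into Proposition~\ref{prop:variancekey}(ii) to bound the right-hand side by $\frac{r^2}{n^2}\bigl(1 + \Theta(\log^4 n/(pn) + \log^3 n/(p^2 n))\bigr)(\ex X)^2$, which by the first part equals $\bigl(1+\Theta(\cdots)\bigr)(\ex Z_v)^2$. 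Subtracting $(\ex Z_v)^2$ yields the upper half of the claimed variance estimate.

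The main obstacle is to confirm that the $\Theta$ is genuinely two-sided, i.e.\ that $\var Z_v$ is not of smaller order than $\bigl(\log^4 n/(pn) + \log^3 n/(p^2 n)\bigr)(\ex Z_v)^2$. I would address this by retracing the proof of Proposition~\ref{prop:variancekey}(i): its derivation of $\ex W_1/(\ex X)^2$ is in fact a chain of asymptotic equalities, containing a factor $(1+\Theta(pn(1-p)^{2r}))$ that is strictly larger than $1$ and contributes the $\log^4 n/(pn)$ excess, and a factor $(1+\Theta(r(1-p)^r))$ supplying the $\log^3 n/(p^2 n)$ term. Combined with $P_1 = 1-o(1)$, easily verified by a Chernoff bound (using~\eqref{eq:chernoffstrong}) plus a union bound over the $2r$ vertices of $D\cup D'$, the single $i=1$ contribution to $\sum_{i} i\,\ex W_i$ already overshoots $(\ex X)^2 r^2/n$ by exactly the required factor. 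The bookkeeping point to verify carefully is that this one-sided excess is not annihilated upon subtracting $(\ex Z_v)^2$; since the contributing factors are strictly greater than $1$ rather than mere error terms of either sign, this cancellation does not occur, and the matching lower bound for $\var Z_v$ follows.
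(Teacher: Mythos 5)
Your argument coincides with the paper's: the identity $\sum_v Z_v = rX$ gives the expectation, the identity $\sum_v Z_v^2=\sum_i i\,W_i$ (pairs of dominating sets with a marked common vertex) gives $\ex Z_v^2=\frac1n\sum_i i\,\ex W_i$, and Proposition~\ref{prop:variancekey}(ii) finishes the upper bound. Note, however, that both Proposition~\ref{prop:variancekey}(ii) and the paper's own proof of this lemma only deliver an \emph{upper} bound ($\le$), and only that direction is ever used later (Lemma~\ref{lem:var2} and the Chebyshev steps in the proof of Theorem~\ref{thm:main_general}); the $\Theta$ in the statement is really an $O$. Your attempt to certify the matching lower bound is the one place where I would push back: you argue that the factors $(1+\Theta(pn(1-p)^{2r}))$ and $(1+\Theta(r(1-p)^r))$ exceed $1$ and that $P_1=1-o(1)$, but $1-P_1$ is itself $\Theta(r(1-p)^r)=\Theta(\log^3 n/(p^2n))$ --- the same order as one of the excess factors you rely on --- so a cancellation between $(1-2(1-p)^r+(1-p)^{2r-1})^{-r+1}$ and $P_1$ (and likewise with the negative $\Theta(r^2/n)$ factor from $[n-r]_{r-1}/[n-1]_{r-1}$) is not ruled out by ``the factors are strictly greater than $1$.'' Since nothing downstream needs the lower bound, this is a flaw in an unnecessary embellishment rather than in the proof of what is actually used.
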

\begin{proof}
First note that
\[
\sum_{v\in[n]} Z_v = r X,
\]
as both sides count dominating sets in $\cD$ with one vertex marked. So $\ex Z_v = \frac{r}{n} \ex X$ by linearity of expectation and since all $Z_v$ have the same distribution, and the first part holds. Similarly,
\[
\sum_{v\in[n]} {Z_v}^2 = \sum_{i=1}^r i W_i,
\]
since both sides count pairs of dominating sets $D,D'\in\cD$ with one marked vertex in the intersection. Therefore,
\[
\ex({Z_v}^2) = \frac1n \sum_{i=1}^r i \ex W_i \le \frac{r^2}{n^2} \left( 1 + h \right) (\ex X)^2 = \left( 1 + h \right) (\ex Z_v)^2,
\]
for some $h =\Theta(\frac{\log^4n}{pn} + \frac{\log^3n}{p^2n})$, by Proposition~\ref{prop:variancekey}. The bound on the variance in the statement follows immediately, and the proof of the lemma is finished.
\end{proof}

\section{Proof of Theorem~\ref{thm:main_general}}\label{sec:theorem}

In order to prove our main result, we first analyze the effect that removing one edge has on the number of dominating sets of smallest size. Given $p=p_n$, recall the definitions of $f(n,k,p)$ and $r$ in~\eqref{eq:fdef} and~\eqref{eq:rdef}. Also recall $X=X_r$ and $\cD=\cD_r$. Let $G=(V,E)$ be a random outcome of $\sG(n,p)$. Throughout this section, a {\bf pair} $uv$ always refers to a pair of different vertices $u,v\in V$ (a pair $uv$ may or may not be an edge in $E$). Similarly, a {\bf directed pair} $\overrightarrow{uv}$ refers to the corresponding ordered pair of vertices (so $uv=vu$ but $\overrightarrow{uv}\ne\overrightarrow{vu}$). Given a pair $uv$, let $\widehat\cD_{uv}$ be the set of dominating sets of size $r$ of the graph $G+uv = (V,E\cup\{uv\})$. Given a directed pair $\overrightarrow{uv}$ and $j \in [r]$, let
\[
\widehat\cD_{j,\overrightarrow{uv}}
= \big\{D\in\widehat\cD_{uv} : v\in D,u\notin D, |\widehat N(u)\cap D|=j\big\},
\]
where $\widehat N(u)$ denotes the set of vertices adjacent to $u$ in $G+uv$.
Define the \textbf{damage} of $\overrightarrow{uv}$ to be
\[
Z_{\overrightarrow{uv}} = \sum_{j=1}^r \frac{| \widehat \cD_{j,\overrightarrow{uv}} |}{j},
\]
and the \textbf{damage} of the corresponding pair $uv$ to be $Z_{uv}=Z_{\overrightarrow{uv}}+Z_{\overrightarrow{vu}}$. Finally, the \textbf{damage} of a set of pairs $A$ is $Z_A=\sum_{e\in A}Z_e$. We will see that this notion constitutes a convenient upper bound on the number of dominating sets of size $r$ destroyed by removing a set of pairs $A$ from the edge set. Let $Y_A$ be the number of dominating sets in $\cD$ that are not dominating anymore after deleting a set of pairs $A$ from $E$, that is, the number of dominating sets of size $r$ of $G$ but not of $G-A = (V, E\setminus A)$. (Note that the definitions of $Z_A$ and $Y_A$ do not require $A\subseteq E$, but in the next observation we do.)

\begin{obs}\label{obs:Y}
Assuming $\gamma(G)=r$, clearly, one strategy to prove a lower bound $b(G)>a$ is to show that $Y_A < X$ for all sets of edges $A\subseteq E$ of size $a$, so the removal of any $a$ edges of $G$ cannot destroy all dominating sets of minimal size.
Unfortunately, $Y_A$ is not easy to compute, since in general $Y_A \ne \sum_{e\in A}Y_e$.
Hence, our notion of damage turns useful in view of the following deterministic result.
\end{obs}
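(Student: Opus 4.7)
The plan is to verify the two claims in Observation~\ref{obs:Y} directly from the definitions, since the observation is essentially a bookkeeping statement that motivates the damage function introduced just above it. Assume $\gamma(G) = r$ and that $Y_A < X$ for every $A \subseteq E$ with $|A| = a$. By construction, $X = X_r$ counts all dominating sets of $G$ of size exactly $r$, while $Y_A$ counts how many of them cease to be dominating in $G - A$. Hence, after removing any such $A$, at least $X - Y_A \ge 1$ minimum dominating sets persist in $G - A$, so $\gamma(G - A) \le r = \gamma(G)$. Since no removal of $a$ edges can raise the domination number, the definition of $b(G)$ yields $b(G) > a$.

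For the second claim, I would note that the union bound trivially gives $Y_A \le \sum_{e \in A} Y_e$, but this inequality is generally strict. The reason is that a single $D \in \cD$ may contain several ``bridging'' edges $xy$ with $x \notin D$, $y \in D$, and $y$ the unique neighbour of $x$ in $D$; removing any one of them destroys $D$. If $A$ happens to contain more than one such bridging edge of the same $D$, then $D$ is counted in $Y_e$ for each such $e$, producing overcounting. An explicit small example (for instance, a path on five vertices whose center and one endpoint form a minimum dominating set, with $A$ chosen to consist of two distinct bridging edges for it) makes the non-additivity concrete; and this lack of additivity is precisely what obstructs a direct linearity-of-expectation argument on $Y_A$.

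The main difficulty of the overall program is therefore not the observation itself, which is essentially immediate, but the need for a quantity that dominates $Y_A$ from above, is additive over $e \in A$, and is still small enough so that $Y_A < X$ can be established with high probability uniformly over all $\binom{|E|}{a}$ subsets $A \subseteq E$. This is precisely the role of the damage $Z_A = \sum_{e \in A} Z_e$: the upcoming deterministic lemma will show $Y_A \le Z_A$ pointwise, after which the estimates on $\ex Z_v$ and $\var Z_v$ from Lemma~\ref{lem:vertex}, combined with Proposition~\ref{prop:variancekey}, allow one to control $Z_A$ simultaneously over all $a$-subsets via a union bound and conclude $b(\sG(n,p)) = \Omega(pn)$.
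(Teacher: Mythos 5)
Your verification of the first claim is sound: if $Y_A<X$ for every $A\subseteq E$ with $|A|=a$, then at least one set in $\cD$ survives in $G-A$, so $\gamma(G-A)\le r=\gamma(G)$, and since $Y$ is monotone in $A$ the same holds for all smaller edge sets, giving $b(G)>a$. This matches the intended reading of the observation.

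However, your discussion of the second claim contains a genuine error. You assert that ``the union bound trivially gives $Y_A\le\sum_{e\in A}Y_e$,'' with non-additivity arising only from overcounting. That inequality is false in general, and the failure in the \emph{other} direction is precisely what makes $Y_A$ hard to handle. A set $D\in\cD$ is destroyed by removing $A$ as soon as some vertex $u\notin D$ has \emph{all} of its $j=|N(u)\cap D|$ edges to $D$ contained in $A$; if $j\ge2$, then $D$ contributes to $Y_A$ while contributing to none of the individual $Y_e$, $e\in A$, since removing a single edge leaves $u$ dominated. Hence the event ``$D$ is destroyed by $A$'' is not the union over $e\in A$ of the events ``$D$ is destroyed by $e$,'' and $Y_A$ can strictly exceed $\sum_{e\in A}Y_e$. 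This is not a cosmetic point: if subadditivity did hold, one could control $Y_A$ by bounding the single-edge quantities $Y_e$ and the damage function would be unnecessary. The weights $1/j$ in the definition of $Z_{\overrightarrow{uv}}$ exist exactly to restore a per-pair additive upper bound: when all $j$ edges from $u$ to $D$ are removed, the $j$ contributions of $1/j$ sum to $1$, which is the content of Lemma~\ref{lem:YZ}. Your closing paragraph describes the role of $Z_A$ correctly, but the claimed subadditivity of $Y$ should be deleted and replaced by the observation that $Y_A$ is neither subadditive nor superadditive over the edges of $A$.
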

\begin{lem}\label{lem:YZ}
For every set $A$ of pairs (not necessarily $A\subseteq E$), $Y_A\le Z_A$. 
\end{lem}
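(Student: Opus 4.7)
The plan is to exhibit, for every $D\in\cD$ counted by $Y_A$, a concrete contribution of weight at least~$1$ to the sum defining $Z_A$; the desired inequality $Y_A\le Z_A$ will then follow by summation over $D$.

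To carry this out, I would fix any $D\in\cD$ that fails to dominate $G-A$. Since $D$ dominates $G$ but not $G-A$, there must exist $u\notin D$ with $N_{G-A}(u)\cap D=\emptyset$. Writing $v_1,\dots,v_j$ for the neighbours of $u$ in $D$ within $G$, we get $j=|N_G(u)\cap D|\ge 1$, and each edge $uv_i$ must lie in $A$ (otherwise $v_i$ would persist in $N_{G-A}(u)\cap D$). The key observation is that each such $uv_i$ is already an edge of $G$, so $G+uv_i=G$ and $\widehat N(u)=N_G(u)$; in particular $|\widehat N(u)\cap D|=j$. Combined with $v_i\in D$, $u\notin D$, and $D$ being a dominating set of size $r$ of $G+uv_i=G$, this yields $D\in\widehat\cD_{j,\overrightarrow{uv_i}}$ for every $i=1,\dots,j$. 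Since the $j$ directed pairs $\overrightarrow{uv_1},\dots,\overrightarrow{uv_j}$ are distinct and their underlying undirected pairs all belong to $A$, the contribution of $D$ to $Z_A$ through this single witness $u$ is at least $\sum_{i=1}^{j}\frac{1}{j}=1$.

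Summing these lower bounds over all $D\in Y_A$ will give $Z_A\ge Y_A$. No double-counting arises because I only extract the contribution corresponding to one specific witness $u=u(D)$ per $D$, and every other contribution to $Z_A$ is nonnegative --- in particular those arising from pairs in $A\setminus E$, which are harmless. The only point needing care is that $\widehat\cD_{j,\overrightarrow{uv}}$ is defined inside $G+uv$ rather than $G$, so a priori the parameter $j$ could shift when we pass from $G$ to $G+uv_i$; the observation that each witness-edge $uv_i$ is already an edge of $G$ resolves this at once, making the augmentation vacuous. What remains is thus a purely deterministic bookkeeping argument requiring no probabilistic input at all.
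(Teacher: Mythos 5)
Your proof is correct and follows essentially the same route as the paper's: pick a witness vertex $u\notin D$ all of whose edges into $D$ lie in $A$, and observe that the $j=|N(u)\cap D|$ corresponding directed pairs each contribute $1/j$ to the damage, totalling $1$ per destroyed set. Your extra remark that $G+uv_i=G$ for the witness edges (so the parameter $j$ in $\widehat\cD_{j,\overrightarrow{uv_i}}$ is unambiguous) is a detail the paper leaves implicit, but the argument is the same.
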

\begin{proof}
The proof is straightforward. Let $D \in \cD$ be a dominating set of $G$ of size $r$ contributing to $Y_A$. Since $D$ fails to dominate the rest of the graph $G-A$, there must be some vertex $u\notin D$ (but, of course, adjacent to some vertex in $D$) such that all $|N(u)\cap D|$ edges connecting $u$ and $D$ in $G$ belong to $A$ (and thus are removed). Each of the corresponding directed pairs $\overrightarrow{uv}$ (for $v\in N(u)\cap D$) contributes $1/|N(u)\cap D|$ to the total damage.
\end{proof}
In order to bound $Y_A$, by the previous lemma, it suffices to estimate $Z_{uv}=Z_{\overrightarrow{uv}}+Z_{\overrightarrow{vu}}$ and sum over all pairs $uv$ in $A$.
It is convenient for our analysis to split the damage $Z_{\overrightarrow{uv}}$ of a directed pair $\overrightarrow{uv}$ into its {\bf light damage}
\[
Z'_{\overrightarrow{uv}} = \sum_{j=L+1}^{r} \frac{|  \widehat\cD_{j,\overrightarrow{uv}}  |}{j},
\]
and its {\bf heavy damage}  
\[
Z''_{\overrightarrow{uv}} = Z_{\overrightarrow{uv}} - Z'_{\overrightarrow{uv}} = \sum_{j=1}^{L}  \frac{|  \widehat\cD_{j,\overrightarrow{uv}}  |}{j}.
\]
(Recall that $L = \lfloor \varrho p r \rfloor$ with $\varrho=\varepsilon^2.$) Similarly as before, the {\bf light damage} of a pair $uv$ is $Z'_{uv}=Z'_{\overrightarrow{uv}}+Z'_{\overrightarrow{vu}}$, and its {\bf heavy damage} is $Z^{''}_{uv}=Z^{''}_{\overrightarrow{uv}}+Z^{''}_{\overrightarrow{vu}}$.  For a given set of pairs $A$, its \textbf{light damage} is $Z'_A=\sum_{e \in A} Z'_e$ and its \textbf{heavy damage} is $Z^{''}_A=\sum_{e \in A} Z^{''}_e$.

We will now estimate the first and second moments of some of the random variables described above. Given any $\overrightarrow{uv}$, we can easily estimate $\ex Z_{\overrightarrow{uv}}$ by summing the probability that a given $D\in\widehat\cD_{j,\overrightarrow{uv}}$ appears in $G+uv$, weighted by $1/j$, over all possible choices of $D$.
\begin{align}
\ex Z_{\overrightarrow{uv}} &= \sum_{j=1}^r \frac{1}{j} \binom{n-2}{r-1} \Big(1-(1-p)^r\Big)^{n-r-1}\binom{r-1}{j-1}p^{j-1}(1-p)^{r-j}
\nonumber \\
&= \frac{r(n-r)}{n(n-1)} \Big(1-(1-p)^r\Big)^{-1} \ex X \sum_{j=1}^r \frac{1}{pr} \binom{r}{j}p^j(1-p)^{r-j}
\label{eq:zuv} \\
&= \frac{(n-r)}{pn(n-1)} \ex X \sim \frac{\ex X}{pn}.
\nonumber
\end{align}
For $Z''_{\overrightarrow{uv}}$ we get better bounds: 
\begin{lem}\label{lem:exp_zuv}
Given any constant $\varepsilon > 0$ sufficiently small, assume that $\log^2/\sqrt n \ll  p \le 1-\varepsilon$. Then, for $n$ sufficiently large and for any $\overrightarrow{uv}$, 
\begin{equation*}
\ex Z''_{\overrightarrow{uv}}
\le
\begin{cases}
\dfrac{\ex X}{(pn)^{2-\varepsilon/2}}, 
& \text{if } p=o(1), 
\\\\
\dfrac{\ex X}{(pn)^{1+\varepsilon^2}},
& p=\Theta(1) \mbox{ and } p \le 1-\eps.
\end{cases}
\end{equation*}
\end{lem}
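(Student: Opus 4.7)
The plan is to mimic the derivation of~\eqref{eq:zuv}, only truncating the sum to $j\le L$. Reasoning exactly as in that calculation, the expected value of $|\widehat{\cD}_{j,\overrightarrow{uv}}|$ factors as a choice of the remaining $r-1$ vertices of $D$ (namely $\binom{n-2}{r-1}$), a binomial probability that $u$ has exactly $j-1$ neighbours in $D\setminus\{v\}$, and the probability $(1-(1-p)^r)^{n-r-1}$ that every vertex outside $D\cup\{u\}$ is dominated. Using the identity $\tfrac{1}{j}\binom{r-1}{j-1}=\tfrac{1}{r}\binom{r}{j}$ together with $(1-(1-p)^r)^{-1}=1+o(1)$ from Lemma~\ref{lem:relations}(ii), the same simplifications as in~\eqref{eq:zuv} reduce the whole expression to
\[
\ex Z''_{\overrightarrow{uv}} \;=\; (1+o(1))\,\frac{\ex X}{pn}\;\pr\!\bigl[1\le \Bin(r,p)\le L\bigr].
\]
The problem thus reduces to a lower-tail binomial estimate.

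Since $L=\lfloor\eps^2 pr\rfloor$ is essentially $(1-\delta)pr$ with $\delta=1-\eps^2$, the next step is to apply the strong Chernoff bound~\eqref{eq:chernoffstrong} to obtain
\[
\pr[\Bin(r,p)\le L]\;\le\; \exp(-c\,pr),\qquad c:=\varphi(-(1-\eps^2)) = 1-\eps^2+2\eps^2\log\eps,
\]
with $c$ arbitrarily close to $1$ for $\eps$ small. To translate this into a power of $pn$, I would invoke Lemma~\ref{lem:relations}(ii): the identity $(1-p)^r=\Theta(\log^2 n/(pn))$ gives $\p r=\log(pn)-2\log\log(pn)+O(1)$, and setting $\beta:=p/\p\in(0,1]$, one obtains $pr=\beta\log(pn)-2\beta\log\log(pn)+O(1)$, whence
\[
\exp(-c\,pr)\;\le\; (pn)^{-c\beta}\,(\log n)^{O(1)}.
\]

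The two regimes would then be treated separately. If $p=o(1)$, then $\beta=1-o(1)$, so the bound simplifies to $(pn)^{-c+o(1)}$; choosing $\eps$ small enough that $c>1-\eps/2$ (possible since $1-c=\eps^2(1+2|\log\eps|)\ll\eps/2$) and using $pn\gg\sqrt n\log^2 n$ to absorb the polylogarithmic slack yields $\ex Z''_{\overrightarrow{uv}}\le\ex X/(pn)^{2-\eps/2}$. If instead $p=\Theta(1)$ with $p\le 1-\eps$, then $\p\le|\log\eps|$, so uniformly $\beta\ge(1-\eps)/|\log\eps|$; since $1/|\log\eps|\gg\eps^2$ for small $\eps$, we have $c\beta>\eps^2$, the polylog factor is comfortably dominated by $n^{c\beta-\eps^2}$, and one arrives at $\ex Z''_{\overrightarrow{uv}}\le\ex X/(pn)^{1+\eps^2}$ (the constant factor $p^{-1-\eps^2}$ being absorbed by the gap $c\beta-\eps^2>0$).

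The only delicate point is checking, in the $p=\Theta(1)$ regime, that $\beta=p/\p$ stays strictly above $\eps^2$ even as $p$ approaches $1-\eps$, since $\beta$ degrades like $1/|\log\eps|$ in that limit. Fortunately $1/|\log\eps|$ goes to zero far slower than $\eps^2$, so the ``$\eps$ sufficiently small'' hypothesis makes the argument go through. Everything else is a routine combination of Chernoff with the asymptotic estimates of Lemma~\ref{lem:relations}.
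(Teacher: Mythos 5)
Your proposal is correct and follows essentially the same route as the paper: reduce $\ex Z''_{\overrightarrow{uv}}$ to $(1+o(1))\frac{\ex X}{pn}\pr(\Bin(r,p)\le L)$ exactly as in~\eqref{eq:zuv}, apply the strong Chernoff bound~\eqref{eq:chernoffstrong} (your constant $c=1-\eps^2+2\eps^2\log\eps$ is precisely the paper's $1-\varrho'$ with $\varrho'=\varrho-\varrho\log\varrho$), and convert $\exp(-cpr)$ into a power of $pn$ via Lemma~\ref{lem:relations}, treating $p=o(1)$ and $p=\Theta(1)$ separately with the same monotonicity observation that $p/\p\ge(1-\eps)/\log(1/\eps)$ for $p\le1-\eps$.
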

\begin{proof}
Arguing as in~\eqref{eq:zuv} and by Lemma~\ref{lem:relations}(ii), we have 
\begin{align*}
\ex Z''_{\overrightarrow{uv}} &= \frac{r(n-r)}{n(n-1)} \Big(1-(1-p)^r\Big)^{-1} \ex X \sum_{j=1}^{L} \frac{1}{pr} \binom{r}{j}p^j(1-p)^{r-j}
\\
& = \frac{(n-r)}{pn(n-1)} \ex X \frac{ \pr (1 \le \Bin(r,p) \le L) }{1-(1-p)^r} 
\\
& \leq \frac{ 1+o(1) }{pn} \ex X \; \pr (\Bin(r,p) \leq L).
\end{align*}
By the stronger version of Chernoff's bound given in~\eqref{eq:chernoffstrong}, writing $\varrho'=\varrho-\varrho \log \varrho$,
$$
\pr (\Bin(r,p) \leq L) = \pr (\Bin(r,p) \leq \varrho pr) \leq \exp(-rp\varphi(\varrho-1)) \leq \exp(-rp(1-\varrho')).
$$
 Note that $\varrho'$ gets small when $\varrho$ does, even if at a slower rate.
If $p\to0$, using $\p=\log(1/(1-p))$, by Lemma~\ref{lem:relations}(i) we have 
\[
\exp(-(1-\varrho')rp) \le \left(\frac{\p n (1+o(1))}{\log^2 n}\right)^{-(1-\varrho')p/\p}
\le \left(p n\right)^{-(1-2\varrho')}.
\]
Now, by our choice of $\varrho=\varepsilon^2$ and using the fact that $2\varepsilon \log \varepsilon \to 0$ as $\varepsilon \to 0$, we have $1-2\varrho'=1-2\varrho+2\varrho \log \varrho > 1-\varepsilon/2$, and the statement follows in this case.

If $p=\Theta(1)$ with $p$ bounded away from $1$, we have 
\[
\exp(-(1-\varrho')rp) \le \left(p n\right)^{-\varrho'},
\]
where we assumed that $\varepsilon$ (and thus $\varrho$) was chosen to be small enough so that the following holds: $\varrho' < (1-\varrho')p/\log(1/(1-p))$ (note that $p$ close to $1$ forces a small $\varrho$, and therefore a small $\varepsilon$). The desired statement follows since $\varrho' > \varrho = \varepsilon^2$.
\end{proof}

In order to bound the variance of $Z''_{\overrightarrow{uv}}$, we will need to use that~\eqref{eq:condI} holds for two consecutive values $n-1$ and $n$. Therefore, we assume that there exist infinitely many such pairs of values, and restrict asymptotics to all $n$ such that both $n-1$ and $n$ satisfy~\eqref{eq:condI}.

\begin{lem}\label{lem:var2}
Given a constant $\varepsilon > 0$, assume that $\log^2 n / \sqrt{n} \ll p \leq 1-\varepsilon$. Moreover, suppose there exist infinite sets $I'\subseteq I\subseteq\nat$ satisfying~\eqref{eq:condI} and~\eqref{eq:Iprime}, and restrict asymptotics to $n\in I'$. Then,
\[
\var Z''_{\overrightarrow{uv}} = O\left(\frac{\log^4n}{pn} + \frac{\log^3n}{p^2n}\right)(\ex Z''_{\overrightarrow{uv}})^2. 
\]
\end{lem}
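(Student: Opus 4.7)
The plan is to expand $\ex[(Z''_{\overrightarrow{uv}})^2]$ as a sum over ordered pairs of candidate dominating sets classified by the intersection size $i = |D \cap D'|$, then compare term by term against $(\ex Z''_{\overrightarrow{uv}})^2$. Given an ordered pair $(D,D')$ of $r$-subsets of $[n]$ with $v \in D \cap D'$, $u \notin D \cup D'$, and $|D \cap D'| = i$, introduce independent binomials $X_1 \sim \Bin(i-1,p)$, $X_2 \sim \Bin(r-i,p)$, $X_3 \sim \Bin(r-i,p)$ that count the $G$-neighbors of $u$ inside $(D \cap D') \setminus \{v\}$, $D \setminus D'$, and $D' \setminus D$. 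Edges incident to $u$ are independent of the remaining edges of $G$, so the joint event that (a)~both $D,D'$ dominate in $G+uv$ and (b)~$|\widehat N(u) \cap D| \le L$ and $|\widehat N(u) \cap D'| \le L$ factorizes. After normalizing against $\ex W_i^{(n-1)}$ (the analogue of $\ex W_i$ computed for $\sG(n-1,p)$) one obtains the clean identity
\[
\ex[(Z''_{\overrightarrow{uv}})^2] = \sum_{i=1}^r \frac{i}{n-1}\,\ex W_i^{(n-1)}\cdot T_i, \quad T_i := \ex\!\left[\frac{\mathbf{1}[X_1{+}X_2\le L{-}1,\; X_1{+}X_3\le L{-}1]}{(1+X_1+X_2)(1+X_1+X_3)}\right].
\]
It is this reduction to $\sG(n-1,p)$ that forces condition~\eqref{eq:condI} to hold also for $n-1$, explaining the definition of $I$ in~\eqref{eq:Iprime}.

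The $i=1$ term is the main contribution. Here $X_1 = 0$ a.s.\ and $X_2,X_3$ are i.i.d., so $T_1 = s^2$ with $s := \sum_{k=0}^{L-1}\pr(\Bin(r-1,p)=k)/(k+1)$. Revisiting the computation in the proof of Lemma~\ref{lem:exp_zuv} gives $\ex Z''_{\overrightarrow{uv}} = \frac{rs}{n-1}\,\ex X^{(n-1)}$; combined with Proposition~\ref{prop:variancekey}(i) applied on $n-1$, this identifies the $i=1$ summand with $(1 + O(\tfrac{\log^4 n}{pn} + \tfrac{\log^3 n}{p^2 n}))(\ex Z''_{\overrightarrow{uv}})^2$.

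The key technical step is to establish $T_i \le CQ_i/L^2$ for $2 \le i \le (1-2\varrho)r$. Throughout this range $(r-i)p > 2L > L-1-j_1$ for every relevant $j_1$, so the conditioning $\{X_2 \le L-1-j_1\}$ places $X_2$ deep in its lower tail, and a Chernoff-tilting estimate shows the conditional distribution concentrates within $O(1)$ of the boundary, yielding $\ex[\mathbf{1}[X_2 \le L-1-j_1]/(1+j_1+X_2)] \le (C'/L)\pr(X_2 \le L-1-j_1)$. Squaring (via the conditional independence of $X_2,X_3$ given $X_1$) and summing over $X_1$ produces the claimed $T_i \le CQ_i/L^2$. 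Plugging in and applying Proposition~\ref{prop:variancekey}(iii) and~(i) on $n-1$ bounds the contribution of $2 \le i \le (1-2\varrho)r$ by $O(\tfrac{\log^4 n}{pn} + \tfrac{\log^3 n}{p^2 n}) \cdot Q_1 r^2 (\ex X^{(n-1)})^2/((n-1)^2 L^2)$. The same tilting argument applied to the one-dimensional variable $X_2 \sim \Bin(r-1,p)$ yields $s^2 = \Theta(Q_1/L^2)$, so this bound equals $O(\tfrac{\log^4 n}{pn} + \tfrac{\log^3 n}{p^2 n})(\ex Z''_{\overrightarrow{uv}})^2$, exactly what the lemma requires. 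The remaining range $i \ge (1-2\varrho)r \ge 0.9r$ (valid for $\varepsilon$ small) is handled by Lemma~\ref{lem:large_range}, whose super-polynomial decay renders its contribution negligible. Subtracting $(\ex Z''_{\overrightarrow{uv}})^2$ then yields the variance bound.

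The main obstacle is the sharp bound $T_i \le CQ_i/L^2$: unlike the earlier variance computations for $X$ or $Z_v$, one must control the expectation of a reciprocal binomial expression under a rare lower-tail conditioning event. This is where the smallness of $\varrho = \varepsilon^2$ matters crucially, ensuring that $(r-i)p$ exceeds $L$ by a definite margin on the contributing range of $i$—which is in turn precisely what the Chernoff tilting argument needs in order to deliver the decisive $1/L^2$ gain.
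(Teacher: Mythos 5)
Your proof is correct and follows the same skeleton as the paper's: expose the edges at $u$ last, identify $\sG(n,p)-u$ with $\sG(n-1,p)$ (which is exactly why condition~\eqref{eq:Iprime} is needed), decompose the second moment over the intersection size $i=|D\cap D'|$, control $2\le i\le r$ via Proposition~\ref{prop:variancekey}(iii) together with Lemma~\ref{lem:large_range}, and match the $i=1$ term against $(\ex Z''_{\overrightarrow{uv}})^2$ via Proposition~\ref{prop:variancekey}(i). The one genuine difference is that you carry the reciprocal weights $1/(1+X_1+X_2)$ through the whole computation, which is what forces your extra tilting estimates $T_i\le C Q_i/L^2$ and $s^2=\Theta(Q_1/L^2)$. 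The paper instead writes $\ex Z''_{\overrightarrow{uv}}=\pr(\Bin(r-1,p)<L)\,\ex_{\sG(n-1,p)}Z_v$ and $\sum_{v}\ex({Z''_{\overrightarrow{uv}}}^2)=\sum_i iQ_i\,\ex_{\sG(n-1,p)}W_i$, identities that hold verbatim only for the unweighted count $\sum_{j\le L}|\widehat\cD_{j,\overrightarrow{uv}}|$; in effect the weights are dropped on both sides of the variance-to-mean-squared ratio. Your version is the more careful one: without the $1/L^2$ gain coming from the concentration of $\Bin(r-i,p)$, conditioned on the lower-tail event, near the boundary of that event, comparing $Q_1(\ex_{\sG(n-1,p)}Z_v)^2$ to the true $(\ex Z''_{\overrightarrow{uv}})^2=s^2(\ex_{\sG(n-1,p)}Z_v)^2$ would cost a factor $\Theta(L^2)=O(\log^2 n)$ in the error term. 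Two points to make explicit in a full write-up, both routine: (a) Proposition~\ref{prop:variancekey}(iii) as \emph{stated} bounds the full sum $\sum_{i=1}^r iQ_i\ex W_i$ by $(1+O(\cdot))$ times the main term, so to bound the tail $\sum_{i\ge 2}$ by $O(\cdot)$ times the $i=1$ term you must invoke the ratio bounds on $(i+1)Q_{i+1}\ex W_{i+1}/(iQ_i\ex W_i)$ from inside its proof rather than the statement alone; and (b) the tilting ratio $\pr(X_2=m-1)/\pr(X_2=m)$ is bounded away from $1$ on $2\le i\le(1-2\varrho)r$ precisely because $(r-i)p\ge 2L$ there, so the constant in $T_i\le CQ_i/L^2$ depends on $\eps$.
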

\begin{proof}
First, observe that $\sG(n,p)-u$ is distributed as $\sG(n-1,p)$, and this is independent of the edges emanating from $u$. By definition, each dominating set $D$ counted by $Z''_{\overrightarrow{uv}}$ is a dominating set of $\sG(n,p)-u$ of size $r$ such that $v\in D$ and $0\le | N(u)\cap D\setminus\{v\} | \le L-1$.
Therefore, we get
\[
\ex Z''_{\overrightarrow{uv}} = \pr(\Bin(r-1,p)<L) \ \ex_{\sG(n-1,p)} Z_v.
\]
Furthermore, given $u\in V$, by counting in two different ways the number of pairs $D,D'$ of dominating sets of $\sG(n,p)-u$ of size $r$ with one marked vertex $v\in D\cap D'$ such that $0\le | N(u)\cap D\setminus\{v\} | \le L-1$, we get
\[
\sum_{v\in V\setminus\{u\}} \ex ({Z''_{\overrightarrow{uv}}}^2) =  \sum_{i=1}^r i Q_i \ex_{\sG(n-1,p)} W_i, 
\]
where $Q_i$ is defined in~\eqref{eq:Qidef}. Therefore, since the distribution of $Z''_{\overrightarrow{uv}}$ does not depend on $v\in V\setminus\{u\}$,
\[
\ex ({Z''_{\overrightarrow{uv}}}^2) = 
\frac{1}{n-1}\sum_{i=1}^r i Q_i \ex_{\sG(n-1,p)} W_i.
\]
Recall from our assumption on $I'$ that $n-1\in I$. Then, 
applying Proposition~\ref{prop:variancekey} with $n-1$ instead of $n$ (at the expense of an additional, negligible, multiplicative factor $1+O(1/n)$), we get
\begin{align*}
\ex ({Z''_{\overrightarrow{uv}}}^2) =& 
\frac{1}{n-1}\sum_{i=1}^r i Q_i \ex_{\sG(n-1,p)} W_i \\
=&Q_1\frac{r^2}{n^2}\left( 1 + \Theta\left(\frac{\log^4n}{pn} + \frac{\log^3n}{p^2n}\right) \right) (\ex X)^2. 
\end{align*}
Hence, 
\begin{align*}
\var Z''_{\overrightarrow{uv}} =& Q_1\left(\frac{r}{n}\right)^2\left( 1 + \Theta\left(\frac{\log^4n}{pn} + \frac{\log^3n}{p^2n}\right) \right) (\ex X)^2 -Q_1 (\ex_{\sG(n-1,p)} Z_v)^2 \\
=&  Q_1\left(\frac{r}{n}\right)^2\left( 1 + \Theta\left(\frac{\log^4n}{pn} + \frac{\log^3n}{p^2n}\right) \right)(\ex X)^2 -Q_1 \left((1+O(1/n))\frac{r}{n}\ex X \right)^2 \\
=& Q_1 \left(\frac{r}{n}\right)^2 O\left(\frac{\log^4n}{pn} + \frac{\log^3n}{p^2n}\right) (\ex X)^2 \\
=& Q_1 (\ex_{\sG(n-1,p)} Z_v)^2 O\left(\frac{\log^4n}{pn} + \frac{\log^3n}{p^2n}\right)\\
=& \left(\pr(\Bin(r-1,p)<L) \times \ex_{\sG(n-1,p)} Z_v\right)^2 O\left(\frac{\log^4n}{pn} + \frac{\log^3n}{p^2n}\right) \\
=& (\ex Z''_{\overrightarrow{uv}})^2 O\left(\frac{\log^4n}{pn} + \frac{\log^3n}{p^2n}\right),
\end{align*}
and the desired property holds.
\end{proof}

Finally, we proceed to the proof of the main theorem.

\begin{proof}[Proof of Theorem~\ref{thm:main_general}]
Let $\eps>0$ be an arbitrarily small constant such that $n^{-1/3+\eps} \le p \le 1-\eps$, and recall that $G=(V,E)$ denotes a random outcome of $\sG(n,p)$. Corollary~\ref{cor:small_variance} yields immediately the first part of the statement for a less restrictive range of $p$. To prove the second part, we combine the strategy described in Observation~\ref{obs:Y} together with Lemma~\ref{lem:YZ}: our goal is to show that, for some sufficiently small constant $\delta > 0$ (only depending on $\eps$), a.a.s.\ for any set $A$ of at most $\delta np$ edges of $G$, the sum of light and heavy damages of $A$ is strictly less than $\frac34 \ex X$. Thus, since a.a.s.\ $X=(1+o(1)) \ex X$ (by Corollary~\ref{cor:small_variance}), we infer that a.a.s.\ not all dominating sets can be removed by deleting at most $\delta np$ edges of $G$, yielding the desired lower bound on $b(G)$. The upper bound follows from~\eqref{eq:trivialupper1}.

We will first bound the heavy damage of any set of at most $\delta np$ edges in $E$. For convenience, we say that a directed pair $\overrightarrow{uv}$ is present in $G$ (or is a directed edge of $G$) if the corresponding pair $uv$ belongs to $E$. Using Lemma~\ref{lem:var2}, by Chebyshev's inequality and noting that $O\left(\frac{\log^4n}{pn} + \frac{\log^3n}{p^2n}\right)=O\left(\frac{\log^4 n}{p^2 n}\right)$, for any directed pair $\overrightarrow{uv}$ (possibly not present in $G$) and $t>0$
\begin{align}\label{eq:chebyshev0}
\pr \left( \left|Z''_{\overrightarrow{uv}} - \ex Z''_{\overrightarrow{uv}}\right| \geq  t \ex Z''_{\overrightarrow{uv}} \right) \le \frac{\var Z''_{\overrightarrow{uv}}}{\left(t \ex Z''_{\overrightarrow{uv}}\right)^2} = O\left( \frac{\log^4 n}{t^2p^2 n} \right).
\end{align}
Making use of the subsubsequence principle (see~e.g.~\cite{JLR}),
we split the analysis into two cases, depending on the asymptotic behaviour of $p$:
set $\nu=2-\varepsilon/2$ if $p=o(1)$; and $\nu=1+\varepsilon^2$ if $p=\Theta(1)$ and $p \leq 1 -\eps$. Using Lemma~\ref{lem:exp_zuv}, the equation above yields
\begin{align}\label{eq:chebyshev1}
\pr \left( Z''_{\overrightarrow{uv}} \geq \frac{\ex X}{(pn)^{\nu}} \left(1 + t\right) \right) = O\left( \frac{\log^4 n}{t^2p^2 n} \right).
\end{align}
Clearly, \eqref{eq:chebyshev1} implies that, uniformly for all $i\ge1$,
\begin{align}\label{eq:chebyshev2}
\pr \left( Z''_{\overrightarrow{uv}} \geq \frac{\ex X}{(pn)^{\nu}} 2^i \right) = O\left( \frac{\log^4 n}{2^{2i} p^2 n} \right).
\end{align}
We call a directed pair ${\overrightarrow{uv}}$ (possibly not in $G$)  \textbf{$i$-bad} if 
$$
2^i  \frac{\ex X}{(pn)^{\nu}} \leq  Z''_{\overrightarrow{uv}} < 2^{i+1}  \frac{\ex X}{(pn)^{\nu}}
$$
and \textbf{bad} if it is $i$-bad for some $i \geq 1$. Directed pairs that are not bad will be called \textbf{good}.
Observe from its definition that $Z''_{\overrightarrow{uv}}$ is independent of the event that $\overrightarrow{uv}$ is present in $G$. Hence, using~\eqref{eq:chebyshev2}, the probability that a directed pair $\overrightarrow{uv}$ is $i$-bad and is present in $G$ is $p \cdot O\big( \frac{\log^4 n}{2^{2i} p^2 n} \big)$, and therefore
the heavy damage of all bad directed edges in the graph $G$ is in expectation at most
\begin{align*}
 n^2 p \sum_{i \geq 1 } 2^{i+1} \frac{\ex X}{(pn)^{\nu}} \; O \left( \frac{\log^4 n}{2^{2i} p^2 n} \right) &= O \left( \frac{\ex X \log^4 n}{p^{1+\nu} n^{\nu-1}} \right) \sum_{i \geq 0} 2^{-i}
 = O \left (\frac {\ex X \log^4 n}{p^{1+\nu} n^{\nu-1}} \right). 
\end{align*}
Consequently, by Markov's inequality, the heavy damage of all bad directed edges in $G$ is a.a.s.\  at most $\ex X \log^5 n/(p^{1+\nu} n^{\nu-1}) = o(\ex X)$, as long as $p \geq n^{-1/3+\varepsilon}$ and $p \leq 1 - \varepsilon$. On the other hand, the heavy damage of a good directed pair is at most $2\frac{\ex X}{(pn)^{\nu}}$ by definition. Therefore, given any set $A$ of pairs (possibly not in $G$) of size at most $\delta pn$, the heavy damage of the set of all good directed pairs $\overrightarrow{uv}$ such that $uv\in A$ is deterministically at most
\[
2\delta pn \cdot 2\frac{\ex X}{(pn)^{\nu}} = O \Big(   \frac{\ex X}{(pn)^{\nu-1}} \Big) = o(\ex X),
\]
where in the last step we used again our assumptions on $p$. 
Putting all the above together, we conclude that a.a.s.\ the heavy damage of any set of edges $A$ in the graph $G$ with $|A| \le \delta np$ is
\begin{equation}\label{eq:heavy}
Z^{''}_A = o(\ex X).
\end{equation}

Now we proceed to bound the light damage of any set $A$ of edges in $G$ of size at most $\delta np$.
The analysis bears some similarities to our previous estimation of the heavy damage, but the role of directed pairs will be taken by vertices.

Using Lemma~\ref{lem:vertex}, by Chebyshev's inequality and noting that $\var Z_v = O\left(\frac{\log^4n}{p^2n}\right) (\ex Z_v)^2$, for any vertex $v$ and $s>0$,
\[
\pr \left( |Z_v - \ex Z_v| \geq s \ex Z_v \right)  \le \frac{\var Z_v}{\left(s\ex Z_v\right)^2}= O\left(\frac{\log^4 n}{s^2 p^2 n}\right).
\]
Thus,
\begin{equation}\label{eq:chebyshev1b}
\pr \left( Z_v \geq \ex X \frac{r}{n} (1+s) \right) = O\left(\frac{\log^4 n}{s^2 p^2 n}\right).
\end{equation}
Clearly,~\eqref{eq:chebyshev1b} implies that, uniformly for all $i\ge1$,
\begin{equation}\label{eq:chebyshev2b}
\pr \left( Z_v \geq \ex X \frac{r}{n} 2^i \right) = O\left(\frac{\log^4 n}{2^{2i} p^2 n}\right).
\end{equation}
We call a vertex $v$ \textbf{$i$-exceptional}, if
$$
 2^i \ex X \frac{r}{n} \leq Z_v \leq 2^{i+1} \ex X \frac{r}{n},
$$
and \textbf{exceptional} if it is $i$-exceptional for some $i \geq 1$.
Let $V_1\subseteq V$ be the set of all exceptional vertices in the graph $G$. We want to bound the number $\sum_{v\in V_1} Z_v$ of dominating sets of size $r$ containing at least one exceptional vertex. Since we are summing over a random set, it is convenient to interpret the previous sum as
\[
\sum_{v\in V_1} Z_v = \sum_{v\in V} Z_v 1_{\{v\in V_1\}} ,
\]
where $1_{\{v\in V_1\}}$ is the indicator function of the event that $v\in V_1$.
Hence, in view of~\eqref{eq:chebyshev2b} and by the linearity of expectation, 
\[
\ex \bigg( \sum_{v\in V_1} Z_v \bigg)
 = n \sum_{i\geq1} 2^{i+1} \ex X \frac{r}{n} \; O \left( \frac{\log^4 n}{2^{2i}p^2 n}\right) =
O \left( \ex X \frac{r \log^4 n}{p^2 n}\right) \sum_{i\geq 0} 2^{-i}
=O \left(\ex X \frac{\log^5 n}{p^3 n}\right).
\]
Thus, by Markov's inequality, a.a.s.\ 
\begin{equation}\label{eq:ZV1}
\sum_{v\in V_1} Z_v \le \ex X \frac{\log^6 n}{p^3 n}.
\end{equation}
We call a vertex \textbf{normal} if it is not exceptional. For a normal vertex $v$, $Z_v$ is at most  $2\ex X \frac{r}{n}$ by definition. 

Now, for a given set $A$ of edges in $G$ of size at most $\delta pn$, let $V(A)$ be the set of vertices containing all endpoints of edges from $A$, that is, 
\[
V(A)=\{ v \in V \mid \exists e \in A \mbox { such that } v \in e\}.
\]
Partition $V(A)$ as follows: $V(A)=V_0(A) \cup V_1(A)$, where $V_0(A)$ is the subset of normal vertices, and $V_1(A)$ the subset of exceptional vertices.
From the definition of light damage, we get 
\begin{equation}\label{eq:Alight}
Z'_A \le \sum_{v \in V(A)} Z_v/(\varepsilon^2 p r)
= \sum_{v \in V_0(A)} Z_v/(\varepsilon^2 p r) + \sum_{v \in V_1(A)} Z_v/(\varepsilon^2 p r).
\end{equation}
For the first sum on the RHS of~\eqref{eq:Alight}, we have
$$
\sum_{v \in V_0(A)} \frac {Z_v}{\varepsilon^2 p r} \leq \frac{2|V_0(A)|\ex X}{\varepsilon^2 p n} \leq \frac{4|A|\ex X}{\varepsilon^2 p n} \leq \frac{4\delta\ex X}{\varepsilon^2} \le \ex X/2,
$$
deterministically and regardless of the choice of $A$, as long as $\delta\le\eps^2/8$. On the other hand, for the second sum on the RHS of~\eqref{eq:Alight}, we use~\eqref{eq:ZV1}, and obtain that a.a.s.\ for every choice of $A$
\[
 \sum_{v \in V_1(A)} \frac {Z_v}{\varepsilon^2 p r}
\le  \sum_{v \in V_1} \frac {Z_v}{\varepsilon^2 p r}
\leq \frac{1}{\varepsilon^2 p r} \ex X \frac{ \log^6 n}{p^3 n}
= O \left( \ex X \frac{ \log^5 n}{p^3 n} \right) = o(\ex X),
\]
since $p\geq n^{-1/3+\varepsilon}$. Hence, a.a.s.\ for every set $A$ of edges of $G$ with $|A| \leq \delta np$, we have 
$$
Z'_A \leq (1+o(1)) \frac {\ex X}{2}.
$$
Combining this, \eqref{eq:heavy} and Corollary~\ref{cor:small_variance}, we conclude that a.a.s.\ for every choice of $A$,
\[
Z_A < \frac34\ex X < X,
\]
as required. The second part of the statement follows and the proof is finished.
\end{proof}

\bibliographystyle{alpha}

\end{document}